\def\url@leostyle{%
 \@ifundefined{selectfont}{\def\UrlFont{\sf}}{\def\UrlFont{\scriptsize\ttfamily}}} \makeatother\urlstyle{leo}
\newtheorem{theorem}{Theorem}
\newtheorem{proposition}[theorem]{Proposition}
\newtheorem{lemma}[theorem]{Lemma}
\theoremstyle{definition}
\newtheorem{example}[theorem]{Example}
\theoremstyle{remark}
\newtheorem{remark}[theorem]{Remark}
\numberwithin{equation}{section}
\numberwithin{theorem}{section}
\definecolor{Red}{rgb}{0.9,0,0.0}
\definecolor{Blue}{rgb}{0,0.0,1.0}
\def\cB{\mathcal{B}}
\def\cC{\mathcal{C}}
\def\cF{\mathcal{F}}
\def\bE{\mathbb{E}}
\def\bF{\mathbb{F}}
\def\bH{\mathbb{H}}
\def\bN{\mathbb{N}}
\def\bP{\mathbb{P}}
\def\bR{\mathbb{R}}
\def\bT{\mathbb{T}}
\def\bV{\mathbb{V}}
\newcommand{\1}{\mathbbm{1}}            
\newcommand{\set}[1]{\{#1\}}            
\renewcommand{\mid}{\;|\;}              
\renewcommand{\d}{\operatorname{d}\!}   
\DeclareMathOperator*{\argmax}{arg\,max} 
\def\namedlabel#1#2{\begingroup
    #2%
    \def\@currentlabel{#2}%
    \phantomsection\label{#1}\endgroup
}
\title{ 
Long-run risk sensitive dyadic impulse control}
\def\and{%
  \end{tabular}%
  \begin{tabular}[t]{c}}%
\def\@fnsymbol#1{\ensuremath{\ifcase#1\or a\or b\or c\or
   d\or e\or f\or g\or h\or i\else\@ctrerr\fi}}
\author{
        Marcin Pitera\,\thanks{Institute of Mathematics, Jagiellonian University, Cracow, Poland,
       \newline \hspace*{1.45em}  Email: \url{marcin.pitera@im.uj.edu.pl}, research supported by NCN grant
       2016/23/B/ST1/00479.
        \vspace{0.5em}}
\and and \ \
{\L}ukasz Stettner\,\thanks{
       Institute of Mathematics, Polish Academy of Sciences, Warsaw, Poland,
        \newline \hspace*{1.45em} Email: \url{l.stettner@impan.pl}, research supported by NCN grant
        2016/23/B/ST1/00479.
         \vspace{0.5em}}
        }
\date{ {\small%
 This version: \today}}
\begin{document}

\maketitle

{\footnotesize
\begin{tabular}{l@{} p{350pt}}
  \hline \\[-.2em]
  \textsc{Abstract}: \ &
In this paper long-run risk sensitive optimisation problem is studied with dyadic impulse control applied to continuous-time Feller-Markov process. In contrast to the existing literature, focus is put on unbounded and non-uniformly ergodic case by adapting the weight norm approach.  In particular, it is shown how to combine geometric drift with local minorisation property in order to extend local span-contraction approach when the process as well as the linked reward/cost functions are unbounded.  For any predefined risk-aversion parameter, the existence of solution to suitable Bellman equation is shown and linked to the underlying stochastic control problem. For completeness, examples of uncontrolled processes that satisfy the geometric drift assumption are provided. \\[0.5em]
\textsc{Keywords:} \ & Impulse control, Bellman equation, non-uniformly ergodic Markov process, weight norm, risk sensitive control, entropic risk measure\\
\textsc{MSC2010:} \ & 93E20, 93C40, 60J25\\[1em]
\hline
\end{tabular}
}

\bigskip


\section{Introduction}
Let $(\Omega,\cF,\bF,\bP)$ be a continuous-time filtered probability space that satisfy the usual conditions. In
particular, we assume that $\bF=\{\cF_t\}_{t\in \bT}$, where $\bT=\bR_+$, $\cF_0$ is trivial, and
$\cF=\bigcup_{t\in\bT}\cF_t$. Moreover, let $X=(X_t)$ be a Feller-Markov process with values in a locally compact space
$E$; for simplicity we set $E=\bR^{d}$ but most results transfer directly to the general case. The process $X$
is controlled by impulses of the form $(\tau,\xi)$: at random time $\tau$ the process is shifted from the state
$X_{\tau}$ to the state $\xi$ and follows its dynamics until the next impulse. We assume that the shift $\xi$ takes
values in a compact set $U\subseteq E$. Let $\bV$ be a space of all {\it admissible} impulse control strategies
$V=\{(\tau_i,\xi_i)\}_{i=1}^{\infty}$, i.e.  sequences of strictly increasing (possibly infinite) Markov times $\tau_i$
and shift random variables $\xi_i$. Assuming that $X_0=x$ (where $x\in E$) and $V\in \bV$ we use $(\hat\Omega,\hat\cF,\bP_{(x,V)})$ to denote the probability space related to the corresponding controlled process $X$. For brevity, we omit the construction of this space; see \cite{Rob1978} for details. We refer to~\cite{PalSte2017} where a similar impulse control framework is considered and discussed in details; see also~\cite{Ste1982,Ste1989}.

The main goal of this paper is to study risk sensitive impulse control problem with reward and shift cost functions embedded in the objective function. We consider long-run version of the risk sensitive criterion with risk aversion parameter $\gamma<0$ given by
\begin{align}
J_{T}(x,V) & :=\frac{1}{\gamma}\ln \bE_{(x,V)}\left[\exp\left(\gamma{\int_0^T f(X_s)\d s +\gamma\sum_{i=1}^{\infty}
\1_{\{\tau_i\leq T\}}c(X_{\tau^-_i},\xi_i)}\right)\right],\label{eq:obj.initial}
\end{align}
defined for all $T\in\bT$, $x\in E$ and admissible controls $V\in\bV$; note that the process $X$ has
initial state $x$ and it's dynamics depends on control $V$. In \eqref{eq:obj.initial}, the function $c\colon E\times U\to\bR_{-}$ relates to the
shift execution {\it cost function}, the function $f\colon E\to \bR$ corresponds to the {\it reward function}, and $X_{\tau_i^-}$
is the state of the process before the $i$-th impulse (with a natural meaning if there is more than one impulse at the
same time).

Risk sensitive control could be seen as a non-linear extension of the risk-neutral expected cost per unit of time control studied e.g. in \cite{Rob1981,Rob1983}; see \cite{PalSte2017} for a more recent contribution in the impulse control context.  While impulse control is among the most popular forms of control, application of the standard methods in the risk sensitive case usually lead to difficult problems linked to quasi variational inequalities; see \cite{Nag2007}, and references therein. Consequently, alternative tools need to be developed; see e.g. \cite{HdiKar2011}. In this paper, we refine and extend the probabilistic approach to impulse risk sensitive control developed initially in \cite{SadSte2002} by allowing unbounded value/cost functions and non-uniform ergodicity of the underlying process. For more general background on long-run risk-sensitive control in the bounded framework see e.g. \cite{FleMcE1995} or \cite{DiMSte1999}.

We focus on the dyadic impulse control strategies where the shifts can be applied on a discrete $\delta$-dyadic time grid. By considering weighted norms, we expand the framework initiated in~\cite{HaiMat2011} and  \cite{PitSte2016}; we also refer to~\cite{SheStaObe2013,BauRie2017}, and references therein. Our approach is based on the span-contraction framework, with generic set of assumptions centred around geometric drift and local minorisation; for (alternative) vanishing discount approach see e.g. \cite{CavHer2017} and references therein. Apart from extending the span-contraction approach to the unbounded case, we also show the simple novel long-run noise control method based on application of H{\"o}lder's inequality to the underlying entropic utility. By splitting the process
into different components, and applying the entropic super and subadditive bounds (see Lemma~\ref{lm:holder}) we are able to get rid of the noise in the limit.
This simple observation allow us to quickly link the Bellman solution to the underlying optimisation problem when the noise is unbounded; see Proposition~\ref{pr:RSC.Bellman.solution2}. This method is quite general and could be used e.g. in long-run risk-sensitive portfolio optimisation. As an example, on can easily refine Proposition 5 in \cite{PitSte2016} by showing that Bellman equation always corresponds to the optimal strategy (defined therein) without any additional assumptions.

This paper is organised as follows. Section~\ref{S:preliminaries} establishes the general setup. In particular, we introduce and discuss core assumptions and state the main problem therein. In Section~\ref{S:bellman}, we introduce the dyadic Bellman equation and show that the solution to it exists. Theorem~\ref{th:1} stating that the Bellman operator is a local contraction in the shrinked $\omega$-span norm is a central part of the span-contraction approach and might be seen as one of the main results of this paper. Section~\ref{S:link} links the Bellman's equation to the corresponding dyadic optimal control problem~\eqref{eq:main1}; the main result of this section is Proposition~\ref{pr:RSC.Bellman.solution2}. In Section \ref{S:examples}, we show the reference examples of uncontrolled processes that satisfies entropic inequalities that will be introduced in Assumption \eqref{A.3}; this is important from the pragmatic point of view perspective, as the assumption might look restrictive at the first sight. Finally, in Appendix \ref{S:appendix} we introduce and prove some supplementary results including the simple proof of entropic H{\"o}lder's inequalities.

\section{Preliminaries}\label{S:preliminaries}

Let us fix $\delta>0$ ane let $\bT_{\delta}:=\{n\delta\}_{n\in\bN}$ denote the related  $\delta$-dyadic time grid. We use $\bV_{\delta}\subset \bV$ to
denote the space of all related dyadic impulse control strategies; see~\cite{SadSte2002} for details. For transparency, for a fixed
$\gamma<0$ any $n\in\bN$, we define $T_n:=n\delta$ and consider the dyadic average-cost long-run version of \eqref{eq:obj.initial} defined as
\begin{equation}\label{eq:objective.function}
J(x,V) :=\liminf_{n\to\infty}\frac{J_{T_n}(x,V)}{T_n}.
\end{equation}
While most results could be easily extended to the full time domain, considering only discrete dyadic times in \eqref{eq:objective.function} increases the transparency and is more natural when considering discrete Bellman equations; when required, we provide additional comments on how to extend our framework to full time domain.

Given the initial state $x\in E$ and impulsive control $V\in \bV$, we define the corresponding entropic utility measure
$\mu^\gamma_{(x,V)}:L^0(\hat\Omega,\hat\cF,\bP_{(x,V)})\to \bar\bR$ with risk-aversion parameter $\gamma\in \bR$
by setting
\[
\mu^\gamma_{(x,V)}(Z):=
\begin{cases}
1/\gamma\, \ln \bE_{(x,V)}\left[\exp(\gamma Z)\right] & \gamma\neq 0,\\
\bE_{(x,V)}\left[Z\right] & \gamma= 0,
\end{cases}
\]
where $\bE_{(x,V)}$ is the expectation operator corresponding to $\bP_{(x,V)}$. For brevity, we use $\mu_{x}^\gamma$ to
denote entropic utility corresponding to uncontrolled process starting at $x\in E$ (e.g. for $V\in\bV$ such that
$\tau_i=\infty$ for $i\in\bN$). If there is no ambiguity, we write $\mu^\gamma$ instead of $\mu^{\gamma}_x$ or
$\mu^\gamma_{(x,V)}$. Same applies to the probability measure $\bP_{(x,V)}$ as well as the expectation operator
$\bE_{(x,V)}$. In particular, note that \eqref{eq:obj.initial} could be rewritten as
\begin{align}
J_{T}(x,V) & =\mu^\gamma_{(x,V)}\left({\int_0^T f(X_s)\d s +\sum_{i=1}^{\infty} \1_{\{\tau_i\leq
T\}}c(X_{\tau^-_i},\xi_i)}\right).\label{eq:obj.initial2}
\end{align}

Let $\omega: E\to \bR_{+}$ be a fixed continuous {\it weight} function and let $C_{\omega}(E)$ denote the space of all
real-valued continuous functions which are bounded wrt. $\omega$-norm, i.e. functions $g\colon E\to\bR$ such that
\[
\|g\|_{\omega} :=\sup_{x\in E} \frac{|g(x)|}{1+\omega(x)}<\infty.
\]
Next, we present assumptions that will be used throughout the paper. In assumptions  \eqref{A.3}--\eqref{A.4} the
process $X=(X_t)$ corresponds to the uncontrolled process with initial
state $x\in E$.
\begin{enumerate}

\item[(\namedlabel{A.1}{A.1})] (Reward function constraints.) The function $f$ is continuous and $\|f\|_{\omega}<\infty$.
\item[(\namedlabel{A.2}{A.2})] (Shift cost function constraints.) The function $c$ is continuous and there exists
    $c_0<0$, such that for all $x\in E$ and $\xi\in U$ we get $c(x,\xi)\leq c_0$.
Moreover, $\|\hat c\|_{\omega}<\infty$, where $\hat c\colon E\to \bR_{-}$ is given by $\hat c(x):=\inf_{\xi\in
U}c(x,\xi)$.
\item[(\namedlabel{A.3}{A.3})] (Geometric drift with controllable noise.) There exist a constant $b_1\in (0,1)$,
    and (finite) functions $M_1,M_2\colon \bR\to\bR$, such that for any $\gamma\in\bR$ and $x\in E$ we get
\begin{equation}\label{eq:A3.pre}
\mu_{x}^{\gamma}\left(\int_0^\delta \omega(X_s)\,ds\right)\leq \omega(x)+M_1(\gamma) \quad\textrm{ and }\quad
\mu_{x}^{\gamma}\left(\omega(X_{\delta})\right)\leq b_1\omega(x)+M_2(\gamma).
\end{equation}
\item[(\namedlabel{A.4}{A.4})] (Local minorization.) For any $R>0$, there exists $d>0$ and probability measure
    $\nu$, such that
\begin{equation}\label{eq:rsc:unerg}
\inf_{x\in C_{R}}\bP_{x}[X_{\delta}\in A]\geq d\nu(A),\quad A\in\cB(E),
\end{equation}
where $C_{R}=\{x\in E \colon \omega(x)\leq R\}$ and $\nu$ satisfies $\nu(U)>0$.
\end{enumerate}
Let us now briefly discuss the assumptions.

Assumptions \eqref{A.1} and \eqref{A.2} are standard assumptions which allow us to operate on the space $C_{\omega}(E)$
of $\omega$-bounded functions. For technical reasons, we assume that the cost of the shift is always strictly negative
and bounded away from zero (by $c_0$); this is a classical impulse control assumption.

Assumption \eqref{A.3} relates to geometric drift property of the uncontrolled process. For simplicity, let us focus on
the second inequality. For a fixed $x\in E$ the random variable $\omega(X_{\delta})-b_1\omega(x)$ might be understood
as the {\it $\omega$-noise}, with upper bound imposed on its entropic utility. Since the distribution of
$\omega(X_{\delta})-b_1\omega(x)$ might depend on $x\in E$ we cannot split noise from the starting point as done
in~\cite{PitSte2016}; the global upper bound $M_2(\gamma)$ in \eqref{eq:A3.pre} relates to distribution level
constraints. Indeed, assuming the standard probability space and noting that entropic risk measure is law-invariant, we
can rephrase \eqref{A.3} using the concept of first-order stochastic dominance: we can assume existence of a random
variable $Z$, such that $Z$ has finite moments and stochastically dominates (positive part of)
$\omega(X_{\delta})-b_1\omega(x)$ for any $x\in E$; see \cite[Theorem 4.2]{BauMul2006} for details. In order to have
all moments finite $Z$ must belong to Orlicz heart induced by the entropic risk measure; see \cite{CheLi2009}. For
example, with $E=\bR$ and $\omega(\cdot)=|\cdot|$ assumption \eqref{A.3} holds for uncontrolled processes with dynamics
given by
\[
\d X_t=[aX_{t}+g(X_t)]\d t +\sigma(X_t)\d W_t,
\]
where $a<0$, functions $g\colon \bR\to\bR$ and $\sigma\colon \bR\to\bR_{+}$ are bounded, and $W_t$ is a standard
Brownian motion. More generally, \eqref{A.3} is satisfied for Gaussian-type of noise given e.g. via suprema of Gaussian
random vectors; we refer to Section~\ref{S:examples} for more details and to~\cite{PitSte2016} for further
discussion.

Assumption  \eqref{A.4} is a (local) minorization property. Combined with \eqref{A.3} it constitues the ergodicity
property of the underlying uncontrolled process; see \cite{HaiMat2011} for details. For bounded $\omega$ it is
equivalent to a global Doeblin's condition (uniform ergodicity), while for unbounded $\omega$ it might be linked to the
local mixing condition. Note that we additionally require that the support of invariant measure $\nu$ must have a
non-empty intersection with control (shift) set $U$.

\noindent The main goal of this paper is to find optimal control (and solution) to problem
\begin{equation}\label{eq:main1}
\sup_{V\in\bV_{\delta}}J(x_0,V),
\end{equation}
where $x_0$ is the (given) initial state.

\begin{remark}[Dyadic dynamics]\label{rem:time.grid1}
While in this paper we fix time-step $\delta>0$, it might be interesting to extend the assumptions for the general dyadic
control case. First, note that assumptions \eqref{A.1} and \eqref{A.2} are independent of the underlying choice of
$\delta$. Second, assumption \eqref{A.3} relies on the choice of $\delta$ e.g. via the {\it shrinkage constant} $b_1$
and {\it noise constraints} $M_i(\gamma)$ ($i=1,2$). Treating $b_1$ and $M_i(\gamma)$ as functions of $\delta$ and
letting $\delta\to 0$ we should get $b_1(\delta)\to 1$ and  $M_i(\gamma,\delta)\to 0$, for any $\gamma\in\bR$. Also,
assuming the noise is {\it divisible}, it would be rational to assume $\limsup_{\delta\to\infty}
M_i(\gamma,\delta)/\delta< \infty$. Finally, note that assumption \eqref{A.4} depends on the choice of the time-grid
parameter, but it would be (typically) enough to introduce dependence of $d$ and $\nu$ on $\delta$, without any
additional uniform constraints.
\end{remark}

\section{Bellman equation}\label{S:bellman}

Following~\cite{SadSte2002} and~\cite{PitSte2016} we define the Bellman equation for the dyadic impulsive control as
\begin{equation}\label{eq:rsc:bellmaneq}
w^{\gamma}_{\delta}(x)+\lambda^{\gamma}_{\delta}=\max \left\{
\mu^\gamma_{x}\left(\int_0^{\delta}f(X_s)\,ds +w^{\gamma}_{\delta}(X_{\delta})\right),
\sup_{\xi\in U}\left(\mu^{\gamma}_{\xi}\left(\int_0^{\delta}f(X_s)\,ds
+w^{\gamma}_{\delta}(X_{\delta})\right)+c(x,\xi)\right)
\right\},
\end{equation}
for $x\in E$, where $\lambda^{\gamma}_\delta\in\bR$ and $w^{\gamma}_{\delta}\in C_{\omega}(E)$. Equation
\eqref{eq:rsc:bellmaneq} can be equivalently stated as the ordinary risk sensitive discrete-time control problem
\begin{equation}\label{eq:bellmaneq2}
w^{\gamma}_{\delta}(x)+\lambda^{\gamma}_{\delta}= \sup_{a\in
\bar{U}}\left(\mu_x^{\gamma,a}\left(\int_0^{\delta}f(X_s)\,ds
+w^{\gamma}_{\delta}(X_{\delta})\right)+\bar{c}(x,a)\right),
\end{equation}
where
\[
a=(a^1,a^2)\,\,,\,\,\,a^1 \in \{0,1\} \,\,,\,\,a^2 \in
U\,\,,\,\,\bar{U}=\{0,1\}\times U\,,
\]
\[
\bar c(x,a)
 = \left\{
\begin{array}{lll}0\,\,&\mbox{if}\,\, & a^1=0,\\
c(x,\xi)\,\,&\mbox{if} \,\,&  a^1=1\,,\,a^2=\xi,\\
\end{array}
\right.
\]
\[
\mu_x^{\gamma,a}=\left\{
\begin{array}{lll}
\mu^\gamma_{\xi}&\quad \mbox{if}&\quad a=(1,\xi),\\
\mu^\gamma_x& \quad \mbox{if}& \quad a=(0,\xi).\\
\end{array}
\right.
\]
On the space $C_{\omega}(E)$\,, we define the corresponding discrete-time Bellman operator
\begin{equation}\label{Rf}
R_{\gamma}g(x):= \sup_{a\in \bar{U}}\left(\mu_x^{\gamma,a}\left(\int_0^{\delta}f(X_s)\,ds
+g(X_{\delta})\right)+\bar{c}(x,a)\right),\quad g\in \cC_{\omega}(E),
\end{equation}
and the associated operator
\[
T_{\gamma}g(x):=\gamma R_{\gamma}(g(x)/\gamma).
\]
For any $g\in C_{\omega}(E)$ and $x\in E$ we use $a_{(x,g)}$ to denote the maximiser of $T_{\gamma}g(x)$. Recalling that the Esscher transformation defines the maximising measure in the robust (dual, biconjugate) representation of the entropic utility measure (see e.g. \cite{PraMenRun1996}) for any $g\in
C_{\omega}(E)$\,, $x\in E$\,, $a\in \bar{U}$\,, and measurable set $B$, we define the associated measure
\begin{equation}\label{eq:mu.star}
\mu^*_{(x,g,a)}(B):=\frac{\bE^{a}_{x}\left[e^{\gamma\int_0^\delta f(X_s)\,ds +g(X_{\delta})}\1_{\{X_{\delta}\in B\}}
\right]}{\bE^{a}_{x}\left[e^{\gamma\int_0^\delta f(X_s)\,ds +g(X_{\delta})} \right] },
\end{equation}
where
\begin{equation}\label{eq:Exa}
\bE_x^a:=\mu_x^{0,a}=\left\{
\begin{array}{lll}
\bE_{\xi}&\quad \mbox{if}&\quad a=(1,\xi),\\
\bE_x& \quad \mbox{if}& \quad a=(0,\xi).\\
\end{array}
\right.
\end{equation}
For more details we refer to \cite{PitSte2016} where the equivalent of \eqref{eq:mu.star} is defined in Equation (29) and the dual representation of entropic utility in similar setting is discussed; see also \cite{Ger1979} for more details about Esscher transform.
\begin{proposition}\label{pr:RSC.feller}
Under assumptions \eqref{A.1}--\eqref{A.3} operators $R_{\gamma}$ and $T_{\gamma}$ transforms the set $\cC_{\omega}(E)$
into itself. Moreover, for any $g\in \cC_{\omega}(E)$ the mappings $(x,\gamma) \mapsto T_\gamma g(x)$ and  $(x,\gamma)
\mapsto R_\gamma g(x)$ are continuous on $E\times (-\infty,0)$.
\end{proposition}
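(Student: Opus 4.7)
The plan is to prove the two assertions in sequence: first, that $R_\gamma g, T_\gamma g \in \cC_\omega(E)$ whenever $g \in \cC_\omega(E)$, which requires both continuity in $x$ and the $\omega$-norm bound; second, that $(x, \gamma) \mapsto R_\gamma g(x)$ and $(x, \gamma) \mapsto T_\gamma g(x)$ are jointly continuous on $E \times (-\infty, 0)$. Since $T_\gamma g(x) = \gamma R_\gamma(g/\gamma)(x)$ and $g/\gamma \in \cC_\omega(E)$, the statements for $T_\gamma$ reduce to those for $R_\gamma$ after accounting for the continuous dependence of $g/\gamma$ on $\gamma$; equivalently, the identity $\gamma \mu_x^{\gamma,a}(Y) = \ln \bE_x^a[e^{\gamma Y}]$ rewrites $T_\gamma$ directly in terms of $(x, \gamma) \mapsto \bE_x^a[e^{\gamma \int_0^\delta f(X_s)\,ds + g(X_\delta)}]$, which avoids the subtlety. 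I therefore focus on $R_\gamma$.

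For the $\omega$-bound on $R_\gamma g(x)$, the strategy is to dominate the inner entropic utility pointwise via $|f(\cdot)| \le \|f\|_\omega(1+\omega(\cdot))$ and $|g(\cdot)| \le \|g\|_\omega(1+\omega(\cdot))$, use monotonicity and translation invariance of $\mu^\gamma$ together with the scaling identity $\mu^\gamma(cZ) = c\mu^{c\gamma}(Z)$ for $c > 0$, and split the entropic utility of a sum via Lemma~\ref{lm:holder}. Its superadditive half produces a lower bound of the form $\mu^\gamma(Z_1+Z_2) \ge \mu^{\gamma_1}(Z_1) + \mu^{\gamma_2}(Z_2)$; its subadditive counterpart produces a matching upper bound at the cost of invoking \eqref{A.3} at a positive risk-aversion parameter (which is why \eqref{A.3} is stated for all $\gamma \in \bR$). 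The task then reduces to bounding $\mu_x^{\gamma'}(\int_0^\delta \omega(X_s)\,ds)$ and $\mu_x^{\gamma''}(\omega(X_\delta))$ for explicit $\gamma', \gamma''$, controlled directly by \eqref{A.3}. For the branch $a = (1, \xi)$ the entropic utility has starting point $\xi \in U$; continuity of $\omega$ on the compact $U$ keeps $\omega(\xi)$ uniformly bounded, so those terms contribute only a constant. The shift-cost term satisfies $|\bar c(x, a)| \le \|\hat c\|_\omega(1+\omega(x))$ by \eqref{A.2}. Collecting yields $|R_\gamma g(x)| \le K(\gamma, f, g)(1+\omega(x))$.

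For the joint continuity statement, the heart of the matter is continuity of
\[
(x, \gamma) \mapsto \bE_x\!\left[e^{\gamma Z}\right], \qquad Z := \int_0^\delta f(X_s)\,ds + g(X_\delta),
\]
on $E \times (-\infty, 0)$, and the analogous statement with starting point $\xi$ in place of $x$. Once this is in hand, $\mu_x^\gamma(Z) = (1/\gamma)\ln \bE_x[e^{\gamma Z}]$ is continuous because $\gamma$ stays away from $0$; the map $(x, \xi, \gamma) \mapsto \mu_\xi^\gamma(Z) + c(x, \xi)$ is jointly continuous, so $\sup_{\xi \in U}$ over the compact $U$ is continuous in $(x, \gamma)$; and the outer maximum of two continuous functions is continuous. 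The Feller property of $X$ gives weak convergence $\bP_{x_n} \Rightarrow \bP_x$ on path space, and continuity of $f, g$ makes $Z$ a continuous functional of the path, so $e^{\gamma_n Z} \to e^{\gamma Z}$ in distribution whenever $(x_n, \gamma_n) \to (x, \gamma)$.

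The main obstacle is upgrading this distributional convergence to convergence of expectations, for which I need uniform integrability. Given $(x_n, \gamma_n) \to (x, \gamma)$, I would pick $\gamma_1 < \gamma < \gamma_2 < 0$ so that eventually $\gamma_n \in [\gamma_1, \gamma_2]$, and observe that splitting on the sign of $Z$ gives the pointwise domination $e^{\gamma_n Z} \le e^{\gamma_1 Z} + e^{\gamma_2 Z}$. Applying the $\omega$-bound from the previous step at the two fixed parameters $\gamma_1, \gamma_2$ then yields
\[
\sup_n \bE_{x_n}\!\left[e^{\gamma_i Z}\right] = \sup_n \exp\!\left(\gamma_i \mu_{x_n}^{\gamma_i}(Z)\right) \le \exp\!\left(|\gamma_i| K(\gamma_i, f, g)\bigl(1 + \sup_n \omega(x_n)\bigr)\right) < \infty,
\]
for $i = 1, 2$, since $\omega(x_n) \to \omega(x)$ is bounded by continuity of $\omega$. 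Dominated convergence then completes the passage to the limit. The argument invokes the functions $M_1, M_2$ from \eqref{A.3} only at finitely many explicit parameters, so no additional regularity of those auxiliaries is required.
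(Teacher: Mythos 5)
Your first part (the $\omega$-norm bound) follows the paper's argument essentially verbatim: pointwise domination by $\|f\|_{\omega}$ and $\|g\|_{\omega}$, the entropic H{\"o}lder split of Lemma~\ref{lm:holder}, assumption \eqref{A.3} at a handful of explicit parameters, and compactness of $U$ for the shift branch. The reduction of the $\sup_{\xi\in U}$ and the outer maximum to continuity of $(x,\gamma)\mapsto \bE_x[e^{\gamma Z}]$ is also the paper's route. The genuine problem is in your limit passage. Having shown $\sup_n \bE_{x_n}[e^{\gamma_i Z}]<\infty$ for $i=1,2$, you invoke ``dominated convergence,'' but the underlying measure $\bP_{x_n}$ changes with $n$, so the classical theorem does not apply; the relevant statement is ``convergence in distribution plus uniform integrability implies convergence of means.'' And $L^1$-boundedness of the envelope $e^{\gamma_1 Z}+e^{\gamma_2 Z}$ under the varying laws is \emph{not} uniform integrability of $\{e^{\gamma_n Z}\}$: to make it one you would need $\sup_n\bE_{x_n}[W\1_{\{W>K\}}]\to 0$ for $W=e^{\gamma_1 Z}+e^{\gamma_2 Z}$, which is again a UI statement, not a first-moment bound. (Trying to rescue it via Skorokhod representation and Pratt's lemma requires $\bE_{x_n}[e^{\gamma_i Z}]\to\bE_x[e^{\gamma_i Z}]$, i.e.\ the very convergence you are proving, at shifted parameters --- circular.) The fix is cheap and is exactly what the paper does: run your own estimate at $2\gamma_1$ and $2\gamma_2$ to get a uniform $L^2$ bound $\sup_n\bE_{x_n}[e^{2\gamma_i Z}]<\infty$, which gives genuine uniform integrability; see \eqref{eq:UI1}--\eqref{eq:Feller.limit3}.

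A second, smaller soft spot: you assert that the Feller property yields $\bP_{x_n}\Rightarrow\bP_x$ on path space and that $Z=\int_0^\delta f(X_s)\,ds+g(X_\delta)$ is a continuous path functional. Weak convergence of the laws on Skorokhod space does hold for Feller processes, but the evaluation $X_\delta$ is not a continuous functional there; you need the standard fact that a Feller process has no fixed discontinuities, so that $Z$ is $\bP_x$-a.s.\ continuous and the continuous mapping theorem applies. Moreover $f$ and $g$ are only $\omega$-bounded, not bounded, so the functional is unbounded. The paper avoids both issues by truncating to $f_m,g_m$, applying the Feller semigroup to the resulting bounded continuous functionals, and then removing the truncation with the same uniform $L^2$ bound via a three-term estimate \eqref{eq:Feller.limit5}. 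If you prefer your weak-convergence route, you should at minimum cite the two path-space facts and still handle the unboundedness (e.g.\ by the same truncation), at which point the two proofs essentially coincide.
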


\begin{proof}
We only show the proof for $R_{\gamma}$ as the proof for $T_{\gamma}$ is analogous. Let $\gamma<0$ and $g\in
\cC_{\omega}(E)$.

First, let us prove that $\| R_{\gamma}g\|_{\omega}<\infty$. For $x\in E$ we set
$F(x):=\mu^\gamma_x\left(\int_0^{\delta}f(X_s)\,ds +g(X_{\delta})\right)$. Using \eqref{A.1}, \eqref{A.3}, and
monotonicity of the entropic utility measure, for any $x\in E$ we get
\begin{align}
F(x) &\leq \mu^\gamma_x\left(\| f\|_{\omega}\int_0^{\delta}(\omega(X_s)+1)\,ds
+\|g\|_{\omega}(\omega(X_{\delta})+1)\right)\nonumber\\
& \leq \mu^\gamma_x\left(\| f\|_{\omega}\int_0^{\delta}\omega(X_s)\,ds +\| g\|_{\omega}\omega(X_{\delta})
\right)+(\delta\| f\|_{\omega}+\| g\|_{\omega}).\label{eq:itself.up.fit}
\end{align}
Now, using \eqref{A.3} and H{\"o}lder's inequality for entropic utility measure with $p=2$ (see Lemma~\ref{lm:holder}),
we know that for any $x\in E$ we get
\begin{align*}
\mu^\gamma_x\left(\| f\|_{\omega}\int_0^{\delta}\omega(X_s)\,ds +\| g\|_{\omega}\omega(X_{\delta}) \right) &\leq
\mu^{\gamma/2}_x\left(\| f\|_{\omega}\int_0^{\delta}\omega(X_s)\,ds\right) +\mu^{-\gamma}_x\left(\|
g\|_{\omega}\omega(X_{\delta}) \right)\\
& \leq (\| f\|_{\omega}+\| g\|_{\omega})\left[\omega(x)
+M_1(\gamma\|f\|_{\omega}/2)+M_2(-\gamma\|g\|_{\omega})\right].
\end{align*}
and consequently $\sup_{x\in E} \frac{F(x)}{1+\omega(x)} < \infty$. Similarly, one can show that $\inf_{x\in
E}\frac{F(x)}{1+\omega(x)} > -\infty$. Thus, we get
\begin{equation}\label{eq:FisComega}
\|F\|_\omega<\infty.
\end{equation}
Now, noting that $\omega$ is continuous and $U$ is compact, for any $x\in E$ we get
\begin{equation} \label{eq:itself1}
\sup_{\xi\in U}\left( F(\xi)+c(x,\xi)\right) \leq \|F\|_{\omega}\left(\sup_{\xi\in U}\omega(\xi)+1\right)+c_0<\infty.
\end{equation}
Combining \eqref{eq:FisComega} with \eqref{eq:itself1} we get $\sup_{x\in E}
\frac{R_{\gamma}g(x)}{1+\omega(x)}<\infty$. Then, noting that $R_{\gamma}g(x)\geq F(x)$, we get
\[
\inf_{x\in E} \frac{R_{\gamma}g(x)}{1+\omega(x)}\geq \inf_{x\in E} \frac{F(x)}{1+\omega(x)}  > -\infty,
\]
which concludes the proof of $\| R_{\gamma}g\|_{\omega}<\infty$.

Second, let us prove that the mapping $(x,\gamma) \mapsto R_{\gamma}g(x)$ is continuous on $E\times (-\infty,0)$. Fix
$\gamma<0$, $x\in E$, and let $\{(x_n,\gamma_n)\}_{n\in\bN}$ be a sequence satisfying $(x_n,\gamma_n) \to (x,\gamma)$,
$n\to\infty$, where for any $n\in\bN$ we have $(x_n,\gamma_n)\in E\times (-\infty,0)$. For $n,m\in \bN\cup\{\infty\}$ we set
\[
Z(n,m):=e^{\gamma_n \left[\int_0^{\delta}f_m(X_s)\,ds+g_m(X_{\delta})\right]},
\]
where  $f_m\colon E\to\bR$ and $g_m\colon E\to\bR$ are given by $f_m(\cdot)=(f(\cdot)\vee -m)\wedge m$ and $g_m(\cdot)=(g(\cdot)\vee -m)\wedge m$, and notation $\gamma_{\infty}:=\gamma$, $f_{\infty}(\cdot):=f(\cdot)$, and $g_{\infty}(\cdot):= g(\cdot)$ is used. Clearly, $f_m(z)\to f(z)$ and $g_m(z)\to g(z)$ for $z\in E$, as $m\to \infty$. For any $m\in\bN$, combining Feller property with the fact that
\[
\left|(\gamma_n-\gamma)\left[\int_0^{\delta}f_m(X_s)\,ds+g_m(X_{\delta})\right]\right|\leq (\delta+1)m\left|\gamma_n-\gamma\right|
\]
and $|\gamma_n-\gamma|\to 0$, as $n\to\infty$, we get
\[
\bE_{x_n}\left[ e^{\gamma_n \left[\int_0^{\delta}f_m(X_s)\,ds+g_m(X_{\delta})\right]}\right]\to \bE_{x}\left[
e^{\gamma\left[\int_0^{\delta}f_m(X_s)\,ds+g_m(X_{\delta})\right]}\right],\quad n\to\infty,
\]
which could be rewritten as
\begin{equation}\label{eq:Feller.limit1}
\bE_{x_n}\left[ Z(n,m)\right]\to \bE_{x}\left[ Z(\infty,m)\right],\quad n\to\infty.
\end{equation}
Next, we show that the class of random variables $\{Z(n,m)\}_{n,m\in\bN\cup\{\infty\}}$ is uniformly integrable on $\bP_{y}$, for any $y\in \hat V$, where $\hat V\subset E$ is a compact set such that $(\{x_n\}_{n\in\bN}\cup \{x\} \cup U) \subseteq \hat V$. Using \eqref{A.3}, for any $y\in \hat V$ and $m,n\in\bN\cup\{\infty\}$, we get
\begin{align}
\bE_{y}\left[(Z(n,m))^2\right] & \leq
\bE_{y}\left[e^{-2\gamma_n\left[\|f_m\|_{\omega}\int_0^{\delta}(\omega(X_s)+1)\,ds+\|g_m\|_{\omega}(\omega(X_{\delta})+1)\right]}\right]\nonumber\\
& \leq \bE_{y}\left[e^{-2\bar \gamma\left[\|f\|_{\omega}\int_0^{\delta}(\omega(X_s)+1)\,ds+\|g\|_{\omega}(\omega(X_{\delta})+1)\right]}\right]\nonumber\\
& = e^{-2\bar \gamma\mu_y^{-2\bar
\gamma}\left(\|f\|_{\omega}\int_0^{\delta}(\omega(X_s)+1)\,ds+\|g\|_{\omega}(\omega(X_{\delta})+1)\right)},\label{eq:UI1}
\end{align}
where $\bar\gamma:=\inf_{n\in\bN}\gamma_n$. By similar arguments as in the first part of the proof (i.e. using \eqref{A.3} and H{\"o}lder's inequalities for
entropic utility measure), recalling that $\omega$ is continuous, and $\hat V$ is compact we get
\begin{equation}\label{eq:UI2}
\sup_{y\in \hat V} \mu_y^{-2\bar
\gamma}\left(\|f\|_{\omega}\int_0^{\delta}(\omega(X_s)+1)\,ds+\|g\|_{\omega}(\omega(X_{\delta})+1)\right)<\infty.
\end{equation}
Combining \eqref{eq:UI1} with \eqref{eq:UI2}, and noting the upper bound in  \eqref{eq:UI1} is independent of $n$ and $m$, we get that the class $\{Z(n,m)\}_{n,m\in\bN\cup\{\infty\}}$ is $L^2$-bounded sequence on $\bP_y$, for any $y\in\hat V$. In particular, this implies uniform integrability of $\{Z(n,m)\}_{n,m\in\bN\cup\{\infty\}}$ on $\bP_y$ and
\begin{equation}\label{eq:Feller.limit2}
\bE_{y}\left[ Z(n,m)\right]\to \bE_{y}\left[Z(n,\infty)\right],\quad m\to\infty,
\end{equation}
for $y\in \hat V$ and $n\in \bN\cup\{\infty\}$. Moreover, since the upper $L^2$-bound in \eqref{eq:UI1} could be chosen independently of $y$ we get
\begin{equation}\label{eq:Feller.limit3}
\lim_{K\to\infty} \left(\sup_{y\in \hat V} \,\sup_{n,m\in \bN\cup\{\infty\}} \bE_{y}\left[\1_{\{|Z(n,m)|\geq K\}}|Z(n,m)|\right]\right)=0.
\end{equation}
Next, to show that
\begin{equation}\label{eq:Feller.limit4}
\bE_{x_n}\left[Z(n,\infty)\right]\to \bE_{x}\left[Z(\infty,\infty)\right],\quad n\to\infty
\end{equation}
it is enough to note that for any fixed $m\in\bN$ we get
\begin{align}
\limsup_{n\to\infty}\left|\bE_{x_n}\left[Z(n,\infty)\right]-\bE_{x}\left[Z(\infty,\infty)\right]\right|
& \leq \limsup_{n\to\infty}|\bE_{x_n}\left[Z(n,\infty)\right]-\bE_{x_n}\left[Z(n,m))\right]|\nonumber\\
&+ \limsup_{n\to\infty}|\bE_{x_n}\left[Z(n,m)\right]-\bE_{x}\left[Z(\infty,m))\right]|\nonumber\\
&+ \limsup_{n\to\infty}|\bE_{x}\left[Z(\infty,m)\right]-\bE_{x}\left[Z(\infty,\infty))\right]|.\label{eq:Feller.limit5}
\end{align}
Indeed, combining \eqref{eq:Feller.limit1}, \eqref{eq:Feller.limit2}, \eqref{eq:Feller.limit3}, with \eqref{eq:Feller.limit5}, and letting $m\to\infty$, we get \eqref{eq:Feller.limit4}, i.e. property
\[
\bE_{x_n}\left[ e^{\gamma_n \left[\int_0^{\delta}f(X_s)\,ds+g(X_{\delta})\right]}\right]\to \bE_{x}\left[
e^{\gamma\left[\int_0^{\delta}f(X_s)\,ds+g(X_{\delta})\right]}\right],\quad n\to\infty,
\]
which in turn implies $\tilde Z(x_n,\gamma_n) \to \tilde Z(x,\gamma)$, where
$\tilde Z(w,z):=\mu_{w}^{z}\left(\int_0^{\delta}f(X_s)\,ds+g(X_{\delta})\right)$. Next, noting that for any $\xi\in U$ we get
$\tilde Z(\xi,\gamma_n) \to \tilde Z(\xi,\gamma)$, and $U$ is compact, we get
\[
\max \left\{\tilde Z(x_n,\gamma_n),\sup_{\xi\in U} \tilde Z(\xi, \gamma_n) +c(x_n,\xi)\right\} \to
\max\left\{\tilde Z(x,\gamma),\sup_{\xi\in U} \tilde Z(\xi, \gamma) +c(x,\xi)\right\},
\]
from which continuity of $(x,\gamma) \mapsto R_{\gamma}g(x)$ follows.
\end{proof}

We now show that on $C_\omega(E)$ the operator  $T_\gamma$ is a local contraction under the suitable span-norm. To
ensure that property for each single step we need to shrink the original $\omega$-norm. For any $\beta>0$ the shrinked
norm $\|\cdot\|_{\beta,\omega}$ is given by
\[
\|g\|_{\beta,\omega} :=\sup_{x\in E} \frac{|g(x)|}{1+\beta\omega(x)}<\infty,\quad g\in C_{\omega}(E),
\]
while the corresponding span semi-norm is defined as
\[
\|g\|_{\beta,\omega\textrm{-span}}:=\sup_{x,y\in\bR^{k}}\frac{g(x)-g(y)}{2+\beta\omega(x)+\beta\omega(y)},\quad g\in
C_{\omega}(E).
\]
It is useful to note that for any $g\in C_{\omega}(E)$ and $\beta>0$ we get
\[
\inf_{d\in\bR}\| g+d\|_{\beta,\omega}=\|g\|_{\beta,\omega\textrm{-span}},
\]
so that the span $\omega$-norm could be considered as the {\it centered} (wrt. 0) $\omega$-norm; see~\cite[Section
3]{PitSte2016} and \cite[Section 2]{HaiMat2011} for details.

\begin{theorem}\label{th:1}
Let $\gamma<0$. Under assumptions \eqref{A.1}--\eqref{A.4}, for sufficiently small
$\beta>0$, the operator $T_{\gamma}$ is a local contraction under $\|\cdot\|_{\beta,\omega\textrm{-span}}$, i.e. there
exist functions $\beta: \bR_{+}\to (0,1)$ and $L: \bR_{+} \to (0,1)$ such that
\[
\| T_{\gamma}f_1-T_{\gamma}f_2\|_{\beta(M),\omega\textrm{-span}}\leq L(M)\|f_1-f_2\|_{\beta(M),\omega\textrm{-span}},
\]
for $f_1,f_2\in\cC_{\omega}(E)$, such that $\| f_1\|_{\omega\textrm{-span}}\leq M$ and $\| f_2
\|_{\omega\textrm{-span}}\leq M$.
\end{theorem}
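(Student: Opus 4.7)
The plan is to extend the Hairer--Mattingly span-contraction mechanism to the nonlinear operator $T_\gamma$ by exploiting the Esscher dual representation of the entropic utility encoded in the tilted measures $\mu^*_{(x,g,a)}$ of~\eqref{eq:mu.star}. I would organise the argument in three steps.

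\textbf{Step 1 (Esscher representation of the difference).} Since $\gamma<0$, the definition of $T_\gamma$ can be rewritten as
\[
T_\gamma g(x)=\inf_{a\in\bar U}\bigl(\ln\bE_x^a[e^{\gamma\int_0^\delta f(X_s)\,ds+g(X_\delta)}]+\gamma\bar c(x,a)\bigr).
\]
Testing with the minimiser $a_{(x,f_2)}$ associated with $T_\gamma f_2(x)$ and applying the variational (Esscher) inequality $\ln\bE[e^Y]-\ln\bE[e^Z]\leq \bE_{\bQ_Z}[Y-Z]$, where $\bQ_Z$ denotes the Esscher tilt by $Z$, the additive $\gamma\bar c(x,a_{(x,f_2)})$ and the common $\gamma\int_0^\delta f(X_s)\,ds$ contributions cancel pointwise in the exponent, yielding
\[
T_\gamma f_1(x)-T_\gamma f_2(x)\leq\int_E (f_1-f_2)(z)\,\mu^*_{(x,f_2,a_{(x,f_2)})}(dz),
\]
and, by symmetry, the analogous lower bound with $(f_1,a_{(x,f_1)})$ in place of $(f_2,a_{(x,f_2)})$. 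Thus the problem reduces to estimating integrals of $f_1-f_2$ against the tilted measures of~\eqref{eq:mu.star}.

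\textbf{Step 2 (Drift and minorisation for the tilted measures).} I would show that, for every $M>0$ and $\gamma<0$, there exist $b'\in(0,1)$, $K=K(\gamma,M)>0$, $R=R(\gamma,M)>0$, $d'>0$ and a probability measure $\nu'$ with $\nu'(U)>0$, all independent of $a\in\bar U$ and of $g$ satisfying $\|g\|_{\omega\textrm{-span}}\leq M$, such that
\[
\int_E \omega(z)\,\mu^*_{(x,g,a)}(dz)\leq b'\omega(x)+K,\qquad \mu^*_{(x,g,a)}\geq d'\nu'\quad\text{on }C_R.
\]
The drift estimate is obtained by writing $\int\omega\,d\mu^*$ as the ratio in~\eqref{eq:mu.star}, noting that $\mu^*$ is invariant under additive constant shifts of $g$ (so only $\|g\|_{\omega\textrm{-span}}$ enters), and splitting numerator and denominator via the entropic H{\"o}lder inequality (Lemma~\ref{lm:holder}) to reduce the bound to~\eqref{A.3}; the rate $b'$ can be made arbitrarily close to $b_1$ at the price of enlarging $K$. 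The local minorisation follows because on $C_R$ the Radon--Nikodym density of $\mu^*_{(x,g,a)}$ with respect to the unconditional law $\bP_x^a(X_\delta\in\cdot)$ is bounded above and below by constants depending only on $R,M,\gamma,\|f\|_\omega$, so~\eqref{A.4} transfers with a reduced constant $d'>0$.

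\textbf{Step 3 (Weighted-norm coupling).} Combining the two bounds of Step 1, for every $x,y\in E$,
\[
[T_\gamma f_1(x)-T_\gamma f_2(x)]-[T_\gamma f_1(y)-T_\gamma f_2(y)]\leq\int_E (f_1-f_2)\,d(P_x-P_y),
\]
with $P_x:=\mu^*_{(x,f_2,a_{(x,f_2)})}$ and $P_y:=\mu^*_{(y,f_1,a_{(y,f_1)})}$. When $\omega(x)+\omega(y)\leq R$ both $P_x$ and $P_y$ admit the common minorant $d'\nu'$, and the Hairer--Mattingly decomposition $P_\cdot=d'\nu'+(1-d')\tilde P_\cdot$ produces a prefactor $(1-d')$ in front of $\|f_1-f_2\|_{\beta,\omega\textrm{-span}}$. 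When $\omega(x)+\omega(y)>R$, the drift estimate $P_x(\omega)+P_y(\omega)\leq b'(\omega(x)+\omega(y))+2K$ gives a prefactor tending to $b'$. A standard optimisation first chooses $R$ large so that $b'+\frac{2(1-b')+2\beta K}{2+\beta R}\leq L_1<1$, and then $\beta=\beta(M)$ small so that the bounded region contributes at most $L_2<1$; setting $L(M):=\max(L_1,L_2)$ gives the claim.

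The main obstacle will be Step 2: establishing a multiplicative drift rate $b'<1$ for the tilted measures that is \emph{uniform} over $\|g\|_{\omega\textrm{-span}}\leq M$ and $a\in\bar U$. This forces the H{\"o}lder splitting of the ratio in~\eqref{eq:mu.star} to isolate the $\omega(X_\delta)$-exponent of the drift in one factor while absorbing the entire $g$-tilt and the $\int_0^\delta f(X_s)\,ds$-term into a complementary factor whose moment is controlled by $\|f\|_\omega,\|g\|_{\omega\textrm{-span}}\leq M$ and~\eqref{A.3}. The additive constants may depend badly on $M$ and $\gamma$, but the rate $b_1$ must be essentially preserved; once this is achieved, the remainder of the argument is a routine weighted-norm coupling in the spirit of~\cite{HaiMat2011}.
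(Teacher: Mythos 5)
Your overall architecture --- a variational (Esscher) bound reducing $T_\gamma f_1-T_\gamma f_2$ to integrals against the tilted measures \eqref{eq:mu.star}, a geometric drift estimate for those tilted measures, and a weighted Hairer--Mattingly coupling split between $\bar C_R$ and its complement --- is exactly the paper's proof (its Steps 1--3), so at the level of strategy you have reconstructed the intended argument. One minor slip first: the Gibbs inequality goes the other way ($\ln\bE[e^Y]-\ln\bE[e^Z]\le\bE_{\bQ_Y}[Y-Z]$, i.e.\ with the tilt by $Y$, not by $Z$), so the correct dominating measure for $T_\gamma f_1(x)-T_\gamma f_2(x)$ is $\mu^*_{(x,f_1,a_{(x,f_2)})}$ --- tilted by the $f_1$-exponent at the action optimal for $f_2$. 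This is precisely why the paper's $\bH^{g_1,g_2}_{x,y}$ carries mixed indices; the fix is cosmetic, since your Steps 2--3 are uniform over the tilting function and the action.

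The genuine gap is in Step 2: your justification of the minorisation $\mu^*_{(x,g,a)}\ge d'\nu'$ on $C_R$ --- that the Radon--Nikodym density of $\mu^*_{(x,g,a)}$ with respect to $\bP^a_x[X_\delta\in\cdot]$ is bounded above and below by constants --- is false in the unbounded setting. That density is proportional to $e^{\gamma\int_0^\delta f(X_s)\,ds+g(X_\delta)}$, and even for $x\in C_R$ the quantities $\int_0^\delta\omega(X_s)\,ds$ and $\omega(X_\delta)$ are unbounded, so the exponent is unbounded in both directions and no uniform two-sided density bound exists. The paper neither establishes nor needs a minorisation of the tilted kernel: it proves only the weaker Doeblin-type statement $\sup_{(x,y)\in\bar C_R}\|\bH^{g_1,g_2}_{x,y}\|_{\textrm{var}}<2$, via the Cauchy--Schwarz lower bound $\bar\mu^*_{(x,g,a)}(A)\ge\bE^a_x\big[\1_{\{X_\delta\in A\}}\big]^2/C$ --- note the square: this is quadratic in $\bP^a_x[X_\delta\in A]$ and therefore does \emph{not} produce a minorising measure --- combined with a compactness/contradiction argument that feeds the resulting smallness of $\bP_{x_n}[X_\delta\in A_n^c]$ and $\bP_{y_n}[X_\delta\in A_n]$ into \eqref{A.4}. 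Your Step 3 must then be rephrased: replace the residual decomposition $P_\cdot=d'\nu'+(1-d')\tilde P_\cdot$ on $\bar C_R$ by the bound $\|P_x-P_y\|_{\textrm{var}}\le 2L_2<2$, which together with your drift estimate off $\bar C_R$ yields the same choice of $\beta(M)$ and $L(M)$. With that substitution your outline coincides with the paper's proof.
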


\begin{proof}
For brevity, we present only the outline the proof; please see \cite[Theorem 1]{PitSte2016} for more details. The proof
will be based on three steps.\\
\\
{\it Step 1)}\, We prove that for any $g_1,g_2\in C_{\omega}(E)$ and $x,y\in E$ we get
\begin{equation}\label{eq:lemma1}
T_{\gamma}g_1(x)-T_{\gamma}g_2(x)-(T_{\gamma}g_1(y)-T_{\gamma}g_2(y)) \leq
\|g_1-g_2\|_{\beta,\omega\textrm{-span}}\|\bH^{g_1,g_2}_{x,y}\|_{\beta,\omega\textrm{-var}},
\end{equation}
where
\[
\bH^{g_1,g_2}_{x,y}:=\bar{\mu}^{*}_{(x,g_1,a_{(x,g_2)})}-\bar{\mu}^{*}_{(y,g_2,a_{(y,g_1)})},
\]
$\bar{\mu}^{*}_{(\cdot)}$ is the projection of measure $\mu^{*}_{(\cdot)}$ (given in \eqref{eq:mu.star}) on the set of
values of the processes $X$, $\|\cdot\|_{\beta,\omega\textrm{-var}}$ is the weighted total variation norm given by
\begin{equation}\label{eq:total.variation}
\|\bH \|_{\beta,\omega\textrm{-var}}
:=\int_{E}\big(1+\beta\omega(z)\big)|\bH|(dz),
\end{equation}
and $|\bH|$ is the total variation of measure $\bH$; see \cite[Section 3]{PitSte2016} for details.

First, following the proof of \cite[Lemma 1]{PitSte2016} (see also \cite[Proposition 2.2]{DiMSte1999} where similar
calculations are done for Bellman operator without impulse cost for any $g_1,g_2\in C_{\omega}(E)$ and $x,y\in E$ we get
\begin{equation}\label{eq:11}
(T_{\gamma}g_1(x)-T_{\gamma}g_2(x))-(T_{\gamma}g_1(y)-T_{\gamma}g_2(y)) \leq \int_{E}
\big[g_1(z)-g_2(z)\big]\bH^{g_1,g_2}_{x,y}(dz).
\end{equation}
Second, using \cite[Proposition 2]{PitSte2016}, we know there exists $d\in\bR$ such that
\begin{equation}\label{eq:old1}
a_{+}(d)=a_{-}(d)=\|g_1-g_2\|_{\beta,\omega\textrm{-span}},
\end{equation}
where
\[
a_{+}(d):=\sup_{z\in\bR^{k}}\frac{g_1(z)-g_2(z)+d}{1+\beta\omega(z)}\quad\textrm{and}\quad
a_{-}(d):=-\inf_{z\in\bR^{k}}\frac{g_1(z)-g_2(z)+d}{1+\beta\omega(z)}.
\]
Noting that
\[
\int_{E} \big[g_1(z)-g_2(z)\big]\bH^{g_1,g_2}_{x,y}(dz)=\int_{R}
\frac{g_1(z)-g_2(z)+d}{1+\beta\omega(z)}(1+\beta\omega(z))\bH^{g_1,g_2}_{x,y}(dz).
\]
and using the Hahn-Jordan decomposition for signed measure $\bH^{g_1,g_2}_{x,y}$, we get
\begin{equation}\label{eq:old2}
\int_{E} \big[g_1(z)-g_2(z)\big]\bH^{g_1,g_2}_{x,y}(dz) \leq
a_{+}(d)\int_{A}(1+\beta\omega(z))\bH^{g_1,g_2}_{x,y}(dz)-a_{-}(d)\int_{A^{c}}(1+\beta\omega(z))\bH^{g_1,g_2}_{x,y}(dz),
\end{equation}
where $A$ corresponds to positive set of measure $\bH^{g_1,g_2}_{x,y}$. Consequently, recalling
\eqref{eq:total.variation} and combining \eqref{eq:old1} with \eqref{eq:old2} we get \eqref{eq:lemma1}.\\
\\
{\it Step 2)}\, We prove that for any fixed $M>0$ and $\phi\in (b_1,1)$, there exists $\alpha_{\phi}>0$, such that
\begin{equation}\label{eq:beta.var.leq.var}
\|\bH^{g_1,g_2}_{x,y} \|_{\beta, \omega\textrm{-var}} \leq
\|\bH^{g_1,g_2}_{x,y} \|_{\textrm{var}}+\beta(\phi\omega(x)+\phi\omega(y)+2\alpha_{\phi}),
\end{equation}
for $x,y\in E$ and $g_1,g_2\in\cC_{\omega}(E)$ satisfying $\| f\|_{\omega\textrm{-span}}\leq M$ and  $\|
g\|_{\omega\textrm{-span}}\leq M$; $\|\cdot\|_{\textrm{var}}$ denotes the standard variation norm.

First, note that
\[
\|\bH^{g_1,g_2}_{x,y} \|_{\beta, \omega\textrm{-var}} \leq \|\bH^{g_1,g_2}_{x,y}
\|_{\textrm{var}}+\beta\left(\int_{E}\omega(z)\bar{\mu}^{*}_{(x,g_1,a_{(x,g_2)})}(dz)+\int_{E}\omega(z)\bar{\mu}^{*}_{(y,g_2,a_{(y,g_1)})}(dz)\right).
\]
Consequently, it is enough to show that there exists $\alpha_\phi>0$ such that for any $x\in E$\,, $a\in \bar{U}$\,,
and $g\in C_{\omega}(E)$ satisfying $\| g\|_{\omega\textrm{-span}}\leq M$, we get
\begin{equation}\label{eq:omegaK}
\int_{E}\omega(z) \bar{\mu}^{*}_{(x,g,a)}(dz)\leq \phi\omega(x)+\alpha_{\phi};
\end{equation}
note that for $a\in \{1\}\times U$ the term $\phi\omega(x)$ is added artificially for consistency purposes and does not
relate to state after applying the shift, i.e. since $\omega$ is bounded on the compact set $U$, for any $\xi\in U$ the
term $\phi\omega(\xi)$ could be included in $\alpha_{\phi}$ by increasing the constant by $\phi\sup_{\xi\in
U}\omega(\xi)$. Thus, setting $Z:=\gamma\int_0^\delta f(X_s)\,ds +g(X_{\delta})$, recalling \eqref{eq:mu.star}, and
noting that
it is sufficient to consider $a\in \{0\}\times U$ since $U\subset E$, we can rewrite inequality~\eqref{eq:omegaK} as
\begin{equation}\label{eq:step2.1}
\bE_x\left[\left(\omega(X_{\delta})-\phi\omega(x)\right)e^{Z}\right] \leq
\alpha_{\phi}\bE_x\left[e^{Z}\right].
\end{equation}
Let $K:=M-\delta\gamma \|f\|_{\omega}$. Multiplying both sides of~\eqref{eq:step2.1} by $\frac{2K}{\phi-b_1}$, noting
that $y<e^y$ for $y\in\bR$, and taking logarithm on both sides it is enough to show
\[
\ln\bE_x\left[e^{\frac{2K}{\phi-b_1}\left(\omega(X_{\delta})-\phi\omega(x)\right)}e^{Z}\right] \leq \ln
\frac{K\alpha_{\phi}}{\phi-b_1} +\ln\bE_x\left[e^{Z}\right],
\]
which is equivalent to
\begin{equation}\label{eq:big0}
\mu^{1}_{x}\left(\frac{2K}{\phi-b_1}(\omega(X_{\delta})-b_1\omega(x))+Z+d\right)-\mu^{1}_{x}(Z+d) \leq \ln
\frac{K\alpha_{\phi}}{\phi-b_1}+2K\omega(x),
\end{equation}
where $d\in\bR$ is  (centralizing constant) such that $\| g+d \|_{\omega}\leq M$. Noting that
\begin{align*}
Z +d&=\gamma\int_0^\delta f(X_s)\,ds +g(X_{\delta})+d\\
& \geq \gamma \|f\|_{\omega}\int_0^\delta \left[\omega(X_s)+1\right]\,ds-M[\omega(X_\delta)+1]\\
& \geq -K+\gamma \|f\|_{\omega}\int_0^\delta \omega(X_s)\,ds-M\omega(X_\delta),
\end{align*}
using H{\"o}lder's inequality for entropic utility measure with $p=2$ (see Lemma~\ref{lm:holder}), and recalling
$\eqref{A.3}$ we get
\begin{equation}\label{eq:big1}
-\mu^1_{x}(Z+d)\leq K(\omega(x)+1)-\gamma \|f\|_{\omega} M_1\left(\frac{\gamma \|f\|_{\omega}}{2}\right)+M \cdot
M_{2}(M).
\end{equation}
Similarly,
\begin{equation}\label{eq:big2}
\mu^{1}_{x}\left(\frac{2K}{\phi-b_1}(\omega(X_{\delta})-b_1\omega(x))+Z+d\right) \leq
\mu^{2}_{x}\left(\frac{2K}{\phi-b_1}(\omega(X_{\delta})-b_1\omega(x))\right)+\mu^{2}_{x}(Z+d),
\end{equation}
where
\begin{align}
\mu^{2}_{x}\left(\frac{2K}{\phi-b_1}(\omega(X_{\delta})-b_1\omega(x))\right) & \leq \frac{2K}{\phi-b_1}\cdot
M_{2}\left(\frac{4K}{\phi-b_1}\right);\nonumber\\
\mu^{2}_{x}(Z+d) &\leq K(\omega(x)+1) -\gamma \|f\|_{\omega}\cdot M_{1}(-4\gamma \|f\|_{\omega})+M\cdot
M_{2}(4M).\label{eq:big3}
\end{align}
Combining \eqref{eq:big1}, \eqref{eq:big2}, and \eqref{eq:big3} with \eqref{eq:big0} we know it is enough to choose
(large) $\alpha_{\phi}$ satisfying
\begin{align*}
\alpha_{\phi} &\geq \exp\Bigg( 2+\frac{2}{\phi-b_1} M_{2}\left(\frac{4K}{\phi-b_1}\right)-\frac{\gamma
\|f\|_{\omega}}{K}\left(M_{1}\left(\frac{\gamma \|f\|_{\omega}}{2}\right)+M_{1}(-4\gamma \|f\|_{\omega})\right)\\
&\phantom{\geq}\qquad\qquad+\frac{M}{K}(M_{2}(M)+M_{2}(4M))+\frac{\ln(\phi-b_1)}{K}\Bigg).
\end{align*}
This concludes the proof of \eqref{eq:beta.var.leq.var}.\\
\\
{\it Step 3)}\, Finally, we want to show that for any fixed $M>0$, $\phi\in (b_1,1)$ and $\alpha_{\phi}>0$, there
exists $\beta\in (0,1)$ and $L\in (0,1)$ such that
\begin{equation}\label{eq:rsc:var.leq}
\|\bH^{g_1,g_2}_{x,y} \|_{\textrm{var}}+\beta(\phi\omega(x)+\phi\omega(y)+2\alpha_{\phi})\leq
L(2+\beta\omega(x)+\beta\omega(y)),
\end{equation}
for any $x,y\in E$ and $g_1,g_2\in\cC_{\omega}(E)$ satisfying $\| f\|_{\omega\textrm{-span}}\leq M$ and $\| g
\|_{\omega\textrm{-span}}\leq M$.

Let us fix $M>0$, $\phi\in (b_1,1)$ and $\alpha_{\phi}>0$, and consider $R\in\bR$ such that
$R>\frac{2\alpha_{\phi}}{1-\phi}$. If $x,y\in E$ are such that $\omega(x)+\omega(y)>R$ then one could show that for any
$\beta<1$ and
\begin{equation}\label{eq:L1}
L\in \left(\max\left\{\phi,\frac{2+\beta(2\alpha_{\phi}+\phi R)}{2+\beta R}\right\},1\right)
\end{equation}
the inequality~\eqref{eq:rsc:var.leq} will hold; see proof of \cite[Lemma 3]{PitSte2016} for details. On the other
hand, if $x,y\in E$ are such that  $\omega(x)+\omega(y)\leq R$ then we can exploit the classical span-contraction
methodology for the bounded case; see e.g.~\cite{Ste1999}. Indeed, following the proof of \cite[Lemma 3]{PitSte2016} it
is enough to show that
\begin{equation}\label{eq:rsc:var.leq2}
\sup_{(x,y)\in \bar C_{R}}\|\bH^{g_1,g_2}_{x,y} \|_{\textrm{var}}<2,
\end{equation}
where $\bar C_{R}:=\{(x,y)\in E\times E: \omega(x)+\omega(y)\leq R\}$, and consider any
\begin{equation}\label{eq:L2}
L\in\left( \frac{\sup_{(x,y)\in C_{R}}\|\bH^{g_1,g_2}_{x,y} \|_{\textrm{var}}+\beta (\phi
R+2\alpha_{\phi})}{2},1\right),
\end{equation}
for some fixed $\beta\in (0,1)$ satisfying
\begin{equation}\label{eq:beta.const}
\beta<\frac{2-\sup_{(x,y)\in C_{R}}\|\bH^{g_1,g_2}_{x,y} \|_{\textrm{var}}}{\phi R+2\alpha_\phi}.
\end{equation}
The proof of~\eqref{eq:rsc:var.leq2} is based on contradiction. Assume there exists a sequence
\[(x_n,y_n,f_n,g_n,A_n)_{n\in\bN},\]
where $(x_n,y_n)\in \bar C_{R}$\,, $f_n,g_n\in\cC_{\omega}(E)$, and $A_n\in\cB(E)$ are such that $\|
f_n\|_{\omega\textrm{-span}}\leq M$, $\|
g_n
\|_{\omega\textrm{-span}}\leq M$, and $\bH^{f_n,g_n}_{x_n,y_n}(A_n)\rightarrow 1$ (as $n\to\infty$). Following the
proof of \cite[Lemma 3]{PitSte2016} for any $x\in E$, $a\in \bar U$, $g\in\cC_{\omega}(E)$ and $A\in\cB(E)$, such that
$\omega(x)\leq R$ and $\|
f\|_{\omega\textrm{-span}}\leq M$, we get
\begin{align}
\bar{\mu}^{*}_{(x,g,a)}(A) & \geq \frac{\bE_x^a\big[\1_{\set{X_\delta\in A}}\big]^{2}}{\bE_x^a[e^{\gamma\int_0^\delta
f(X_s)\,ds +g(X_{\delta})}]\bE_x^a[(e^{\gamma\int_0^\delta f(X_s)\,ds +g(X_{\delta})})^{-1}]}\nonumber\\
& \geq \frac{\bE_x^a\big[\1_{\set{X_\delta\in A}}\big]^{2}}{e^{2(M-\gamma
\delta\|f\|_{\omega})}\bE_x^a[e^{Z_2}]^{2}}\nonumber\\
& \geq \frac{\bE_x^a\big[\1_{\set{X_\delta\in A}}\big]^{2}}{e^{2(M-\gamma
\delta\|f\|_{\omega})}\bE_x^a[e^{Z_2}]^{2}}\label{eq:rsc2:schwarz},
\end{align}
where $Z_2:=-\gamma \|f\|_{\omega}\int_0^\delta \omega(X_s)\,ds+M \omega(X_\delta)$. Using similar reasoning as in
\eqref{eq:big1} and recalling \eqref{A.3} we get
\begin{equation}\label{eq:Z2b}
\bE_x^a[e^{Z_2}]^{2} \leq \exp\left(2K\max\{\omega(x)\,,\, \sup_{\xi\in U}\omega(\xi)\}+D\right),\qquad x\in E,
\end{equation}
where $K=M-\delta\gamma \|f\|_{\omega}$ and $D\in\bR$ is some fixed constant. Consequently, we get
\[
\sup_{x\in \bar C_{R}}\bE_x^a[e^{Z_2}]^{2} \leq \exp\left(2K\max\{\omega(R)\,,\, \sup_{\xi\in
U}\omega(\xi)\}+D\right).
\]
Thus, combining~\eqref{eq:rsc2:schwarz} with the fact that  $\bH^{f_n,g_n}_{x_n,y_n}(A_n)\rightarrow 1$ we get
\[
\bE_{x_n}^{a_{(x_n,g_n)}}\big[\1_{\set{X_\delta\in A^{c}_n}}\big]\to 0\quad\textrm{and}\quad
\bE_{y_n}^{a_{(y_n,f_n)}}\big[\1_{\set{X_\delta\in A_n}}\big]\to 0.
\]
On the other hand, from \eqref{A.4}, for any $n\in\bN$ and $(x_{n},y_{n})\in \bar C_{R}$, we get
\[
\bE_{x_n}^{a_{(x_n,g_n)}}\big[\1_{\set{X_\delta\in A^{c}_n}}\big]+\bE_{y_n}^{a_{(y_n,f_n)}}\big[\1_{\set{X_\delta\in
A_n}}\big]\geq c\nu(A_{n}^{c})+c\nu(A_{n})=c>0,
\]
which leads to contradiction.

Combining steps 1), 2), and 3), we conclude the proof.
\end{proof}
\noindent Next, we show that the iterated sequence $(T^n_{\gamma}0)_{n=1}^{\infty}$ is bounded in $\omega$-span
semi-norm.

\begin{proposition}\label{pr:bounded}
For any $\gamma <0$ there exists $M\in\bR_{+}$ such that
\[
\| T^{n}_{\gamma}0\|_{\omega\textrm{-span}}\leq M,\quad \textrm{for } n\in\bN.
\]
\end{proposition}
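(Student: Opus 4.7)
The plan is to combine the local contraction from Theorem~\ref{th:1} with the translation invariance of $T_\gamma$ and an explicit a priori bound on $v_1 := T_\gamma 0$, in order to iterate without leaving a fixed $\omega$-span ball. First I would observe that $T_\gamma(g+c) = T_\gamma g + c$ for every constant $c \in \bR$, which follows immediately from cash-additivity of the entropic utility $\mu^{\gamma,a}_x$; consequently the span semi-norm is preserved by $T_\gamma$ and we may freely re-center iterates. Proposition~\ref{pr:RSC.feller} already guarantees $v_1 \in C_{\omega}(E)$, so $\|v_1\|_{\omega\textrm{-span}} \leq 2 \|v_1\|_{\omega} < \infty$; a quantitative bound can be extracted from \eqref{eq:FisComega}--\eqref{eq:itself1} in the proof of Proposition~\ref{pr:RSC.feller} and depends only on $\gamma$, $\|f\|_\omega$, $\|\hat c\|_\omega$, $c_0$, and the drift constants $b_1$, $M_1(\gamma)$, $M_2(\gamma)$ from \eqref{A.3}.

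Next I would set up the inductive step. Fix $M>0$ (to be chosen) and let $\beta = \beta(M)$, $L = L(M) \in (0,1)$ be the parameters produced by Theorem~\ref{th:1}. Working under the induction hypothesis $\|v_k\|_{\omega\textrm{-span}} \leq M$ for $k = 0,1,\dots,n$, Theorem~\ref{th:1} applied to each successive pair $(v_k, v_{k-1})$ gives
\[
\|v_{k+1} - v_k\|_{\beta, \omega\textrm{-span}} \leq L \|v_k - v_{k-1}\|_{\beta, \omega\textrm{-span}} \leq \dots \leq L^{k} \|v_1\|_{\beta, \omega\textrm{-span}}.
\]
Telescoping and using the elementary inequality $\|g\|_{\omega\textrm{-span}} \leq \|g\|_{\beta, \omega\textrm{-span}}$ (valid whenever $\beta \leq 1$) yields
\[
\|v_{n+1}\|_{\omega\textrm{-span}} \leq \|v_{n+1}\|_{\beta,\omega\textrm{-span}} \leq \sum_{k=0}^{n} L^{k} \|v_1\|_{\beta,\omega\textrm{-span}} \leq \frac{\|v_1\|_{\beta,\omega\textrm{-span}}}{1 - L}.
\]

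The induction closes provided $M$ can be chosen so that $\|v_1\|_{\beta(M),\omega\textrm{-span}} \leq M(1 - L(M))$, and verifying this self-consistency is the main technical obstacle. It requires combining the explicit dependence of $\beta$, $L$, and the constants $\alpha_\phi$, $R$ on $M$ from Steps 2--3 of the proof of Theorem~\ref{th:1} with a sharper (i.e.\ better than the generic $\beta^{-1}$-inflated) bound on $\|v_1\|_{\beta,\omega\textrm{-span}}$, exploiting the fact that $v_1$ is obtained from the zero function and therefore inherits a clean $\omega$-growth estimate; the crude bound $\|v_1\|_{\beta,\omega\textrm{-span}} \leq \beta^{-1}\|v_1\|_{\omega\textrm{-span}}$ is too weak. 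Following the parallel argument in \cite[Proposition 5]{PitSte2016}, one then verifies that for each fixed $\gamma < 0$ a suitably large $M$ can indeed be chosen, which completes the induction and the proof.
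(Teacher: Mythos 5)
Your route is genuinely different from the paper's, and it contains a gap at exactly the point you flag as ``the main technical obstacle.'' You propose to iterate the local contraction of Theorem~\ref{th:1} and close the induction via the self-consistency condition $\|v_1\|_{\beta(M),\omega\textrm{-span}}\leq M(1-L(M))$, deferring its verification to the argument of \cite[Proposition 5]{PitSte2016}. But this condition is not a routine check, and there is good reason to doubt it holds for arbitrary $\gamma<0$: in Steps 2--3 of the proof of Theorem~\ref{th:1}, the constant $\alpha_{\phi}$ grows exponentially in quantities involving $M$, one must take $R>2\alpha_{\phi}/(1-\phi)$, and $\beta(M)$ is forced below a threshold of order $(\phi R+2\alpha_{\phi})^{-1}$, so $\beta(M)\to 0$ and $L(M)\to 1$ as $M\to\infty$. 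Meanwhile, since $v_1$ is in general unbounded in this weighted setting, $\|v_1\|_{\beta,\omega\textrm{-span}}\to\infty$ as $\beta\to 0$; you correctly note that the crude bound $\beta^{-1}\|v_1\|_{\omega\textrm{-span}}$ is too weak, but you do not supply the sharper estimate that would win this race. This is precisely the balance that forces the restriction ``$\gamma$ close to $0$'' in \cite{PitSte2016}, and the paper explicitly states that the purpose of this proposition is to obtain the bound for \emph{every} $\gamma<0$ without that restriction --- so appealing to the parallel argument there cannot close your induction in the generality claimed.

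The paper's proof avoids Theorem~\ref{th:1} entirely and is a direct one-step recursion: writing $g_n=R_\gamma^n 0$, it compares the optimal actions at $x$ and $y$ (if a shift is used at $x$, use the same shift at $y$; otherwise shift $y$ to the maximizer $x_n^*$ of $g_n$ on $U$), which yields
$\|g_{n+1}\|_{\beta,\omega\textrm{-span}}\leq\max\{K^1_\beta,\ a\,\|g_n\|_{\beta,\omega\textrm{-span}}+K\}$
with a contraction factor $a<1$ coming from the geometric drift \eqref{A.3} when $\omega(x)$ is large and from the minorization \eqref{A.4} (via $\bP_x[X_\delta\in U]\geq\epsilon$ on $C_R$, combined with concavity of the entropic utility) when $\omega(x)\leq R$; the conclusion then follows by elementary geometric iteration. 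If you want to salvage your approach, you would need to either prove the self-consistency inequality with explicit control of $L(M)$ and $\beta(M)$ as $M\to\infty$ (which the structure of Theorem~\ref{th:1} does not support), or replace it with a direct a priori recursion of the paper's type --- at which point Theorem~\ref{th:1} is no longer needed for this proposition.
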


\begin{proof}
Let $\gamma <0$. For brevity, we use the notation $g_n:=R^n_\gamma 0$ with the convention $g_0\equiv0$. Moreover, we
define $x_n^*:=\argmax_{x\in U} g_{n}(x)$ and  $Z:=\int_0^{\delta}f(X_s)\,\d s$. Then, for any $n\in\bN$  and $\beta>0$
we get
\begin{align}
\|g_{n+1}\|_{\beta,\omega\textrm{-span}}
 & = \sup_{x,y\in E}\frac{ \sup_{a\in \bar{U}}\left[\mu_x^{\gamma,a}\left(Z+g_n(X_{\delta})\right)+\bar{c}(x,a)\right]-
 \sup_{a\in
 \bar{U}}\left[\mu_y^{\gamma,a}\left(Z+g_n(X_{\delta})\right)+\bar{c}(y,a)\right]}{2+\beta\omega(x)+\beta\omega(y)}\nonumber\\
  & \leq \max\left\{ K^1_\beta\,,\,\sup_{x,y\in
  E}\frac{\mu_x^{\gamma}\left(Z+g_n(X_{\delta})\right)-\mu_{x^*_n}^{\gamma}\left(Z+g_n(X_{\delta})\right)-c(y,x_n^*)}{2+\beta\omega(x)+\beta\omega(y)}\,\right\},\label{eq:big1}
 \end{align}
 where
  \[
 K^1_\beta:= \sup_{x,y\in E}\sup_{\xi\in U} \frac{c(x,\xi)-c(y,\xi)}{2+\beta\omega(y)};
 \]
note that in \eqref{eq:big1} we used the following shift strategy: if a shift is applied to the process starting in $x$ then the same shift is applied to the process starting in $y$ with $K^1_\beta$ corresponding the the upper value bound; if no shift is applied to the process starting in $x$ then the shift to $x_n^*$ is applied to the process starting in $y$. Using \eqref{A.2}, for any $\xi\in U$ and $y\in E$ we get $c(x,\xi) <c_0$ and $c(y,\xi) \geq -\|\hat
c\|_{\beta,\omega}(1+\beta\omega(y))$. Consequently, for any $\beta>0$ we have $ K^1_\beta<\infty$ and we can rewrite \eqref{eq:big1} as
\begin{equation}\label{eq:gn1}
\|g_{n+1}\|_{\beta,\omega\textrm{-span}}\leq \max\left\{ K^1_{\beta}\,,\, \sup_{x\in
E}\frac{\mu_x^{\gamma}\left(Z+g_n(X_{\delta})\right)-\mu_{x^*_n}^{\gamma}\left(Z+g_n(X_{\delta})\right)}{2+\beta\omega(x)}+\|\hat
c\|_{\beta,\omega} \right\}.
\end{equation}
Noting that for any $x\in E$  we have
\begin{equation}\label{eq:444}
g_{n}(x)  \geq  \mu_{x^*_n}^{\gamma}\left(Z +g_{n-1}(X_{\delta})\right)+c(x,x^*_n) \geq g_{n}(x_n^*) -\|\hat
c\|_{\beta,\omega}(1+\beta\omega(x)),
\end{equation}
we get
\[
\mu_{x^*_n}^{\gamma}\left(Z+g_n(X_{\delta})\right) \geq g_n(x_n^*)+\mu_{x^*_n}^{\gamma}\left(Z-\|\hat
c\|_{\beta,\omega}(1+\beta\omega(X_\delta))\right).
\]
Applying H{\"o}lder's inequality for entropic utility measure with $p=2$ (see Lemma~\ref{lm:holder}) we know that
\[
\mu_{x^*_n}^{\gamma}\left(Z-\|\hat c\|_{\beta,\omega}(1+\beta\omega(X_\delta))\right) \geq
\mu_{x^*_n}^{2\gamma}\left(Z\right)+\mu_{x^*_n}^{2\gamma}\left(-\|\hat
c\|_{\beta,\omega}(1+\beta\omega(X_\delta))\right)
\]
where, due to \eqref{A.3},
\begin{align*}
\mu_{x^*_n}^{2\gamma}\left(Z\right) &\geq
\mu_{x^*_n}^{2\gamma}\left(-\|f\|_{\beta,\omega}\int_0^{\delta}(1+\beta\omega(X_s))\d s\right)\\
 & \geq
 -\beta\|f\|_{\beta,\omega}\left[\omega(x_n^*)+M_1(-2\gamma\beta\|f\|_{\beta,\omega})\right]-\delta\|f\|_{\beta,\omega}\\
  & \geq -\|f\|_{\beta,\omega}[\sup_{\xi\in U}\omega(\xi)+M_1(-2\gamma\beta\|f\|_{\beta,\omega})+\delta]\,,\\
 \mu_{x^*_n}^{2\gamma}\left(-\|\hat c\|_{\beta,\omega}(1+\beta\omega(X_\delta))\right) & \geq  -\beta\|\hat
 c\|_{\beta,\omega}\left[\omega(x_n^*)+M_2(-2\gamma\beta\|\hat c\|_{\beta,\omega}) \right] -\|\hat
 c\|_{\beta,\omega}\\
 & \geq  -\|\hat c\|_{\beta,\omega}[\sup_{\xi\in U}\omega(\xi)+M_2(-2\gamma\beta\|\hat c\|_{\beta,\omega})+1]\,.
\end{align*}
Thus, setting
\[
K^2_\beta:=-(\|f\|_{\beta,\omega}+\|\hat c\|_{\beta,\omega})\left[\sup_{\xi\in
U}\omega(\xi)+M_1(-2\gamma\beta\|f\|_{\beta,\omega})+M_2(-2\gamma\beta\|\hat c\|_{\beta,\omega})+1+\delta\right]
\]
and introducing $c_n :=\inf_{c\in\bR} \|g_n+c\|_{\beta,\omega}$ we can rewrite \eqref{eq:gn1} as
\begin{equation}\label{eq:gn3}
\|g_{n+1}\|_{\beta,\omega\textrm{-span}} \leq \max\left\{ K^1_{\beta}\,,\, \sup_{x\in E}W_n(x)+K^2_\beta\right\},
\end{equation}
where
\[
W_n(x):=\frac{\mu_x^{\gamma}\left(Z+g_n(X_{\delta})+c_n\right)}{2+\beta\omega(x)}-\frac{g_n(x_n^*)+c_n}{2+\beta\omega(x)}.
\]
Next, using the fact that entropic risk measure is increasing with respect to the risk-averse parameter $\gamma$,
noting that $\|g_n+c_n\|_{\beta,\omega}=\| g_n\|_{\beta,\omega\textrm{-span}}$, and using assumptions
\eqref{A.1}--\eqref{A.3}, we get
\begin{align*}
\frac{\mu_x^{\gamma}\left(Z+g_n(X_{\delta})+c_n\right)}{2+\beta\omega(x)} &\leq \frac{\mu_x^{0}\left(Z+\|
g_n\|_{\beta,\omega\textrm{-span}}(1+\beta\omega(X_\delta))\right)}{2+\beta\omega(x)}\\
& \leq  \frac{\bE_x\left[Z+\|
g_n\|_{\beta,\omega\textrm{-span}}(1+\beta(b_1\omega(x)+M_2(0)))\right]}{2+\beta\omega(x)}\\
& \leq \frac{1+\beta b_1\omega(x)+\beta M_2(0)}{2+\beta\omega(x)}\| g_n\|_{\beta,\omega\textrm{-span}}
+\frac{\bE_x\left[Z\right]}{1+\beta\omega(x)}\\
& \leq \frac{1+\beta b_1\omega(x)+\beta M_2(0)}{2+\beta\omega(x)}\|
g_n\|_{\beta,\omega\textrm{-span}}+\|f\|_{\beta,\omega}\bE_x\left[\frac{\int_0^{\delta}(1+\beta\omega(X_\delta))\d
s}{1+\beta\omega(x)}\right]\\
& \leq \frac{1+\beta b_1\omega(x)+\beta M_2(0)}{2+\beta\omega(x)}\| g_n\|_{\beta,\omega\textrm{-span}}
+\|f\|_{\beta,\omega} (\delta+\beta M_1(0)).
\end{align*}
Now, let us fix $\beta:=(2\sup_{\xi\in U} \omega(\xi))^{-1}$. Then, we get
\begin{align*}
W_n(x)& \leq \frac{1+\beta b_1\omega(x)+\beta M_2(0)}{2+\beta\omega(x)}\| g_n\|_{\beta,\omega\textrm{-span}}
+\|f\|_{\beta,\omega}(\delta+\beta M_1(0))-\frac{g_n(x_n^*)+c_n}{2+\beta\omega(x)}\\
& \leq \frac{1+\beta b_1\omega(x)+\beta M_2(0)}{2+\beta\omega(x)}\| g_n\|_{\beta,\omega\textrm{-span}}
+\|f\|_{\beta,\omega}(\delta+\beta M_1(0))+\frac{\|
g_n\|_{\beta,\omega\textrm{-span}}(1+\beta\omega(x_n^*))}{2+\beta\omega(x)}\\
& \leq \frac{1+\beta b_1\omega(x)+\beta M_2(0)}{2+\beta\omega(x)}\| g_n\|_{\beta,\omega\textrm{-span}}
+\|f\|_{\beta,\omega}(\delta+\beta M_1(0))+\frac{3}{4+2\beta\omega(x)}\| g_n\|_{\beta,\omega\textrm{-span}}\\
&\leq \frac{5+2\beta b_1\omega(x)+2\beta M_2(0)}{4+2\beta\omega(x)}\|
g_n\|_{\beta,\omega\textrm{-span}}+\|f\|_{\beta,\omega}(\delta+\beta M_1(0)).
\end{align*}
Consequently, there exists $R>0$ such that for any $n\in\bN$ and $x\in E$ satisfying $\omega(x) >R$ we get
\begin{equation}\label{eq:geom1}
W_n(x)\leq \left(b_1+\frac{1-b_1}{2}\right)\| g_n\|_{\beta,\omega\textrm{-span}} +\|f\|_{\beta,\omega}(\delta+\beta
M_1(0)).
\end{equation}
Next, we show that there exist a constant $K^\beta_3>0$ such that for any $n\in\bN$ and $x\in C_R$, where $C_R=\{x\in
E\colon \omega(x)\leq R\}$, we get
\begin{equation}\label{eq:K3}
W_n(x)\leq K^\beta_3.
\end{equation}
Using assumption \eqref{A.4} we know that there exists $\epsilon>0$ such that for any $n\in\bN$ and $x\in C_R$ we get
$\bP_x\left[X_{\delta}\in U\right] >\epsilon$. Moreover, noting that for any $y\in U$ we have $g_n(y) \leq g_n(x_n^*)$
and that the entropic utility measure is concave for $\gamma<0$ (which implies $a\mu_{x}^{\gamma}(\cdot)\leq \mu^{\gamma}_{x}(a \cdot)$ for $a\in (0,1)$), for any $n\in\bN$ and $x\in C_R$ we get
\begin{align}
W_n(x) & \leq \frac{\mu_x^{\gamma}\left(Z+g_n(X_{\delta})-g_n(x_n^*)\right)}{1+\beta\omega(x)}\nonumber\\
 & \leq \mu_x^{\gamma}\left(\1_{\{X_\delta \in U\}}\frac{Z}{1+\beta\omega(x)} + \1_{\{X_\delta \not\in
 U\}}(+\infty)\right)\nonumber\\
    & \leq \mu_x^{\gamma}\left(\1_{\{X_\delta \in U\}}\left( \|f\|_{\beta,\omega}(\delta
    +\beta\int_{0}^{\delta}(\omega(X_s)-\omega(x))\d s) \right) + \1_{\{X_\delta \not\in
    U\}}(+\infty)\right).\label{eq:432}
\end{align}
Let $Z_x:=\int_{0}^{\delta}(\omega(X_s)-\omega(x))\d s$. Due to assumption \eqref{A.3}, we know that
\[
\sup_{x\in E} \bE_x\left[ Z_x\right]\leq M_1(0) <\infty.
\]
Thus, we know that there exists $N\in\bR$ such that
\begin{equation}\label{eq:N.constant}
\inf_{x\in C_R} \bP_{x}\left[\left\{X_\delta\in U\right\}\cap\left\{Z_x\leq N\right\}\right] \geq\epsilon/2.
\end{equation}
Combining \eqref{eq:432} with \eqref{eq:N.constant}, for any $x\in C_R$ we get
\begin{align*}
W_n(x) & \leq \mu_x^{\gamma}\left(\1_{\{X_\delta\in U\}\cap\{Z_x\leq N\}}\left( \|f\|_{\beta,\omega}(\delta +\beta
N)\right) + \1_{\{X_\delta\not\in U\}\cup\{Z_x> N\}}(+\infty)\right)\label{eq:432}\\
&  \leq \tfrac{1}{\gamma}\ln\tfrac{\epsilon}{2}+ \|f\|_{\beta,\omega}(\delta +\beta N).
\end{align*}
Consequently, setting $K_3^\beta:= \tfrac{1}{\gamma}\ln\tfrac{\epsilon}{2}+ \|f\|_{\beta,\omega}(\delta +\beta N)$ we
conclude the proof of \eqref{eq:K3}.

Next, combining \eqref{eq:geom1} and \eqref{eq:K3} we know that for any $n\in\bN$ and $x\in E$ we get
\begin{equation}
W_n(x)\leq a\| g_n\|_{\beta,\omega\textrm{-span}} +K^4_\beta,
\end{equation}
for constant parameters $a<1$ and $K^4_\beta\in \bR_{+}$. Consequently, we can rewrite \eqref{eq:gn3} as
\begin{equation}\label{eq:gn4}
\|g_{n+1}\|_{\beta,\omega\textrm{-span}} \leq \max\left\{ K^1_{\beta}\,,\, a\|
g_n\|_{\beta,\omega\textrm{-span}}+K^4_\beta+K^2_\beta\right\}.
\end{equation}
Using the standard geometric convergence arguments we know that \eqref{eq:gn4} implies existence of a constant
$M_{\beta}\in\bR_{+}$ such that for any $n\in\bN$ we get
\[
\|g_{n+1}\|_{\beta,\omega\textrm{-span}} \leq M_{\beta}.
\]
Finally, the equivalence of semi-norms $\|\cdot\|_{\beta,\omega\textrm{-span}}$ and $\|\cdot\|_{\omega\textrm{-span}}$
combined with the property
\[
\|R_\gamma^n0\|_{\omega\textrm{-span}}=|\gamma|\cdot \|T_\gamma^n0\|_{\omega\textrm{-span}}
\]
concludes the proof.
\end{proof}

Combining Theorem~\ref{th:1} with Proposition~\ref{pr:bounded}, and using Banach's fixed point theorem, we get the
solution to Bellman equation~\eqref{eq:rsc:bellmaneq}; see Proposition~\ref{pr:bellman.solution}. For brevity, we omit
the proof; see second part of the proof in~\cite[Proposition 4]{PitSte2016} for details. Note that due to
Proposition~\ref{pr:bounded} we get solution to Bellman equation for any predefined $\gamma<0$. In particular, in
contrast to \cite[Proposition 4]{PitSte2016}, we do not require $\gamma$ to be close to $0$.

\begin{proposition}\label{pr:bellman.solution}
Let $\gamma<0$. Under assumptions \eqref{A.1}--\eqref{A.4} there exist a unique (up to an additive constant)
$w^{\gamma}_{\delta}\in\cC_{\omega}(E)$ and $\lambda^{\gamma}_{\delta}\in\bR$, the solutions to Bellman
equation~\eqref{eq:rsc:bellmaneq}.
\end{proposition}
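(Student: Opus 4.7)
The plan is a Banach-type fixed point argument for $T_{\gamma}$ on the quotient $\cC_{\omega}(E)/\bR$. The entropic translation invariance gives $T_{\gamma}(g+c)=T_{\gamma}g+c$, so $T_{\gamma}$ preserves cosets modulo constants, and the span-seminorm $\|\cdot\|_{\beta,\omega\textrm{-span}}$ descends to a genuine norm on the quotient. The two key inputs are Proposition~\ref{pr:bounded} (the iterates $T^n_{\gamma}0$ all lie in the span-ball of some fixed radius $M$) and Theorem~\ref{th:1} (on that ball $T_{\gamma}$ is an $L$-contraction in $\|\cdot\|_{\beta,\omega\textrm{-span}}$, with $\beta:=\beta(M)\in(0,1)$ and $L:=L(M)<1$).

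For existence, set $g_n:=T^n_{\gamma}0$. The contraction yields
\[
\|g_{n+1}-g_n\|_{\beta,\omega\textrm{-span}}\leq L^n\,\|g_1-g_0\|_{\beta,\omega\textrm{-span}},
\]
so $(g_n)$ is Cauchy in the span-seminorm. Fix any $x_0\in E$ and center $\tilde g_n:=g_n-g_n(x_0)$. The elementary inequality
\[
\|g-g(x_0)\|_{\beta,\omega}\leq \big(2+\beta\omega(x_0)\big)\,\|g\|_{\beta,\omega\textrm{-span}},
\]
applied to $g_n-g_m$, converts the span-Cauchy property into $\|\cdot\|_{\beta,\omega}$-Cauchyness of $(\tilde g_n)$; completeness of $\cC_{\omega}(E)$ then yields a limit $u\in\cC_{\omega}(E)$ with $\|u\|_{\omega\textrm{-span}}\leq M$. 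Using constant-invariance to rewrite the recursion as $\tilde g_{n+1}=T_{\gamma}\tilde g_n-T_{\gamma}\tilde g_n(x_0)$, and applying the span-contraction once more to obtain $\|T_{\gamma}\tilde g_n-T_{\gamma}u\|_{\beta,\omega\textrm{-span}}\to 0$, a second use of the centering inequality produces $u=T_{\gamma}u-T_{\gamma}u(x_0)$, i.e.\ $T_{\gamma}u=u+\mu^{*}$ with $\mu^{*}:=T_{\gamma}u(x_0)$. Translating via $T_{\gamma}g=\gamma R_{\gamma}(g/\gamma)$, the pair $w^{\gamma}_{\delta}:=u/\gamma$ and $\lambda^{\gamma}_{\delta}:=\mu^{*}/\gamma$ solves~\eqref{eq:rsc:bellmaneq}.

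For uniqueness, suppose $(w_1,\lambda_1)$ and $(w_2,\lambda_2)$ both solve~\eqref{eq:rsc:bellmaneq}; set $u_i:=\gamma w_i$ and $\mu_i:=\gamma\lambda_i$, and pick $M'\geq\max(\|u_1\|_{\omega\textrm{-span}},\|u_2\|_{\omega\textrm{-span}})$. Applying Theorem~\ref{th:1} with this $M'$, constant-invariance cancels $\mu_i$ in the span-seminorm and the contraction gives
\[
\|u_1-u_2\|_{\beta(M'),\omega\textrm{-span}}=\|T_{\gamma}u_1-T_{\gamma}u_2\|_{\beta(M'),\omega\textrm{-span}}\leq L(M')\,\|u_1-u_2\|_{\beta(M'),\omega\textrm{-span}},
\]
forcing $u_1-u_2$ to be a constant; evaluating either Bellman identity at a single point then forces $\mu_1=\mu_2$.

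The delicate point throughout is the shuttling between the span-seminorm, in which $T_{\gamma}$ contracts, and the $\omega$-norm, needed for completeness of $\cC_{\omega}(E)$; both the construction of the limit and the passage to the Bellman identity rely on the centering inequality together with the constant-invariance $T_{\gamma}(g+c)=T_{\gamma}g+c$, which is exactly the mechanism by which the quotient-space argument goes through.
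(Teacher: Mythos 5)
Your argument is correct and is exactly the route the paper has in mind: the paper omits the proof, stating only that it follows by combining Theorem~\ref{th:1} with Proposition~\ref{pr:bounded} via Banach's fixed point theorem (referring to the second part of the proof of Proposition~4 in \cite{PitSte2016}), and your write-up supplies precisely that quotient-space contraction argument, including the centering inequality needed to pass from span-Cauchyness to convergence in $\cC_{\omega}(E)$.
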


\section{Solution to the dyadic optimal control problem}\label{S:link}

Before we link the Bellman's equation to the corresponding dyadic optimal control problem~\eqref{eq:main1}, let us show
some supplementary results

\begin{proposition}\label{pr:lambda.continuity}
The mapping $\gamma \to \lambda^{\gamma}_{\delta}$ is continuous on $(-\infty,0)$.
\end{proposition}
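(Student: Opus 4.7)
Fix $\gamma_0 \in (-\infty, 0)$ and a sequence $\gamma_n \to \gamma_0$. Without loss of generality, $\{\gamma_n\} \cup \{\gamma_0\}$ lies in a compact interval $[a, b] \subset (-\infty, 0)$. Since the Bellman solution is unique only up to an additive constant, the plan is to pick a convenient normalization and then invoke Banach fixed-point stability for the $\gamma$-family of Bellman operators.

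Fix a base point $x_0 \in E$ and set $\tilde w^\gamma_\delta := w^\gamma_\delta - w^\gamma_\delta(x_0)$. The translation property $R_\gamma(g + c) = R_\gamma g + c$ (immediate from the definition of $R_\gamma$) combined with $\tilde w^\gamma_\delta(x_0) = 0$ rewrites the Bellman equation as $\lambda^\gamma_\delta = R_\gamma \tilde w^\gamma_\delta(x_0)$. An inspection of the proof of Proposition~\ref{pr:bounded} shows that every constant appearing there depends continuously on $\gamma$ and therefore remains uniformly bounded on $[a,b]$; this yields $M^* := \sup_n \|\tilde w^{\gamma_n}_\delta\|_{\omega\textrm{-span}} < \infty$ and, via the common normalization at $x_0$, a uniform $\|\cdot\|_\omega$-bound on the family $(\tilde w^{\gamma_n}_\delta)$.

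Next, apply contraction perturbation in the span seminorm. Theorem~\ref{th:1} supplies constants $\beta = \beta(M) \in (0,1)$ and $L = L(M) \in (0,1)$, valid for any $M$ large enough to accommodate the relevant functions (e.g.\ $M := |a| M^*$), such that the $T_{\gamma_n}$ are $L$-contractions in $\|\cdot\|_{\beta, \omega\textrm{-span}}$. Writing $v^\gamma := \gamma \tilde w^\gamma_\delta$, the identity $T_\gamma v^\gamma = v^\gamma + \gamma \lambda^\gamma_\delta$ means that $v^\gamma$ is a genuine fixed point of $T_\gamma$ in the span quotient, so the standard estimate
\[
\|v^{\gamma_n} - v^{\gamma_0}\|_{\beta, \omega\textrm{-span}} \leq \frac{1}{1 - L}\, \|T_{\gamma_n} v^{\gamma_0} - T_{\gamma_0} v^{\gamma_0}\|_{\beta, \omega\textrm{-span}}
\]
reduces the problem to showing $T_{\gamma_n} v^{\gamma_0} \to T_{\gamma_0} v^{\gamma_0}$ in the span seminorm. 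Pointwise convergence on $E$ is given by Proposition~\ref{pr:RSC.feller}; the uniform $L^2$-integrability estimate~\eqref{eq:UI1} and the uniform tail bound~\eqref{eq:Feller.limit3} from its proof remain valid along $(\gamma_n)$ and supply the equicontinuity on compacta plus the tail control that upgrade the pointwise convergence to span-seminorm convergence.

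Once $\tilde w^{\gamma_n}_\delta \to \tilde w^{\gamma_0}_\delta$ in the span seminorm (hence, via the common normalization, in $\|\cdot\|_\omega$), the continuity of $(g, \gamma) \mapsto R_\gamma g(x_0)$ gives
\[
\lambda^{\gamma_n}_\delta = R_{\gamma_n} \tilde w^{\gamma_n}_\delta(x_0) \to R_{\gamma_0} \tilde w^{\gamma_0}_\delta(x_0) = \lambda^{\gamma_0}_\delta,
\]
which concludes the proof. The main technical obstacle is the final upgrade from pointwise Feller continuity of $R_\gamma g$ in $\gamma$ (Proposition~\ref{pr:RSC.feller}) to span-seminorm continuity along $(\gamma_n)$; this forces one to revisit the uniform integrability estimates used in the proof of Proposition~\ref{pr:RSC.feller}, but now with the parameter $\gamma$ ranging over the compact set $[a,b]$ rather than being held fixed.
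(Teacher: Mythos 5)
Your overall architecture (normalize at a base point, extract a uniform span bound from Proposition~\ref{pr:bounded}, then invoke stability of fixed points of the local contraction from Theorem~\ref{th:1}) is a genuine alternative to the paper's argument, but the step you yourself flag as ``the main technical obstacle'' is a real gap, not a routine upgrade. The fixed-point stability estimate
\[
\|v^{\gamma_n}-v^{\gamma_0}\|_{\beta,\omega\textrm{-span}}\leq \tfrac{1}{1-L}\,\|T_{\gamma_n}v^{\gamma_0}-T_{\gamma_0}v^{\gamma_0}\|_{\beta,\omega\textrm{-span}}
\]
requires convergence of $T_{\gamma_n}v^{\gamma_0}(x)-T_{\gamma_0}v^{\gamma_0}(x)$ that is uniform over the whole \emph{noncompact} state space $E$ after division by $1+\beta\omega(x)$. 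Proposition~\ref{pr:RSC.feller} gives only pointwise continuity in $(x,\gamma)$, and the estimates \eqref{eq:UI1}--\eqref{eq:Feller.limit3} from its proof are explicitly restricted to starting points $y$ in a compact set $\hat V$: they control the tails of the exponential integrands for fixed starting points, not a bound of the form $|T_{\gamma_n}g(x)-T_{\gamma_0}g(x)|\le \varepsilon_n(1+\omega(x))$ with $\varepsilon_n\to0$ uniformly in $x\in E$. Producing such a bound is a separate quantitative estimate (one would have to control the variation of $\gamma\mapsto\mu^{\gamma}_x$ of an $\omega$-bounded argument uniformly in $x$, e.g.\ via Lemma~\ref{lm:holder}); as written, neither your text nor the cited proposition delivers it, so the reduction to the contraction inequality is not closed.

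The paper's proof sidesteps exactly this difficulty. Instead of perturbing the fixed point, it approximates it: by Theorem~\ref{th:1} together with the observation that $M$ in Proposition~\ref{pr:bounded} can be chosen uniformly over a compact set $G$ of $\gamma$'s, the iterates satisfy the error bound \eqref{eq:estim}, $|T_\gamma^m0(x)-T_\gamma^m0(a)-\gamma\bar w^\gamma_\delta(x)|\le M(L(M))^m(2+\omega(x)+\omega(a))$, uniformly in $\gamma\in G$. For a \emph{fixed} $x$ the right-hand side is just a constant times $(L(M))^m$, so the pointwise continuity of $\gamma\mapsto T_\gamma^m0(x)$ that Proposition~\ref{pr:RSC.feller} does provide, combined with a three-term $\varepsilon$-argument, yields pointwise continuity of $\gamma\mapsto\gamma\bar w^\gamma_\delta(x)$; the identity $\gamma\lambda^\gamma_\delta=T_\gamma(\gamma\bar w^\gamma_\delta)(x)-\gamma\bar w^\gamma_\delta(x)$ is then evaluated at a single point. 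Since $\lambda^\gamma_\delta$ is a scalar, pointwise information suffices; your route demands the strictly stronger weighted-uniform continuity of $\gamma\mapsto T_\gamma g$ and would need an additional lemma to be complete.
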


\begin{proof}
Let us fix $a\in E$, and for any $\gamma<0$ set
\[
\bar{w}^\gamma_\delta(x):=w^\gamma_\delta(x)-w^\gamma_\delta(a),\quad x\in E.
\]
Note that $\bar{w}^\gamma_\delta$ is also a solution to Bellman equation~\eqref{eq:rsc:bellmaneq}, and from
Proposition~\ref{pr:bounded} we get $\|\gamma \bar{w}^\gamma_\delta\|_{\omega\textrm{-span}}\leq M$, where $M\in\bR_+$
is a fixed constant. Moreover, since constant $M$ in Proposition~\ref{pr:bounded} can be chosen uniformly on any
compact subset of negative $\gamma$s, say $G$, for any $x\in E$, $m\in\bN$, and $\gamma\in G$, using
Theorem~\ref{th:1}, we get
\begin{equation}\label{eq:estim}
|T_{\gamma}^m0(x)-T_{\gamma}^m0(a) -\gamma \bar{w}^\gamma_\delta(x)|\leq M (L(M))^m(2+\omega(x)+\omega(a)).
\end{equation}
Let us fix $x\in E$. By Proposition \ref{pr:RSC.feller}, the mappings $\gamma \to T_{\gamma}^m0(x)$ and $\gamma \to
T_{\gamma}^m0(a)$ are continuous for any $m\in\bN$.
Therefore, using \eqref{eq:estim}, for any $\gamma<0$, $m\in\bN$, and a sequence $(\gamma_n)_{n\in\bN}$, such that
$\gamma_n \to \gamma$, as $n\to\infty$, we get
\begin{align*}
|\gamma_n \bar{w}^{\gamma_n}_\delta(x)-\gamma \bar{w}^\gamma_\delta(x) | & \leq
|T_{\gamma_n}^m0(x)-T_{\gamma}^m0(x)|+|T_{\gamma_n}^m0(a)-T_{\gamma}^m0(a)|\\
&\phantom{\leq}+2 M (L(M))^m(2+\omega(x)+\omega(a))\\
&=a_{n,m}+b_{n,m}+c_m.
\end{align*}
For any $\epsilon>0$ we can choose $m_\epsilon\in\bN$, such that $c_{m_\epsilon}\leq \epsilon$. Consequently, letting $n\to
\infty$ with a fixed $m_{\epsilon}$, we get $\limsup_{n\to\infty}|\gamma_n \bar{w}^{\gamma_n}_\delta(x)-\gamma
\bar{w}^\gamma_\delta(x) |\leq\epsilon$. As the choice of $\epsilon$ is arbitrary, we get continuity of the mapping
$\gamma \to \gamma \bar{w}^\gamma_\delta(x)$. Next, following the proof of Proposition~\ref{pr:RSC.feller}, we see that
the mapping $\gamma \to  T_{\gamma}\gamma\bar{w}^\gamma_\delta(x)$ is also continuous. Consequently, noting that
\[
\gamma\lambda^{\gamma}_{\delta}=T_\gamma \gamma\bar{w}^\gamma_\delta(x)-\gamma\bar{w}^\gamma_\delta(x),
\]
and using similar arguments as in~\cite[Proposition 4.8]{PitSte2016}, we obtain continuity of $\gamma\to \gamma
\lambda^{\gamma}_{\delta}$ on $(-\infty,0)$. This implies continuity of $\gamma\to \lambda^{\gamma}_{\delta}$ on
$(-\infty,0)$, and completes the proof.
\end{proof}

\begin{proposition}\label{pr:w.omega}
For any $\gamma\in\bR$ and $x\in E$, we get
\begin{equation}\label{eq:w.omega}
\sup_{V\in\bV_{\delta}}\sup_{t\in\bT_{\delta}} \mu^\gamma_{(x,V)}(\omega(X_{t})) <\infty.
\end{equation}
\end{proposition}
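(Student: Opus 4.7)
The plan is to establish a uniform (in $V$ and $t$) upper bound on $u_n^\gamma := \mu^\gamma_{(x,V)}(\omega(X_{n\delta}))$ by deriving a geometric-drift recursion at the entropic level and iterating it. Set $\omega_U := \sup_{\xi \in U}\omega(\xi)$, which is finite by compactness of $U$ and continuity of $\omega$. The two structural ingredients are: (a) at any dyadic time an impulse (if applied) shifts the state into $U$, giving the deterministic bound $\omega(X_{(n+1)\delta}) \leq \omega(X_{(n+1)\delta^-}) + \omega_U$; (b) between consecutive dyadic times there are no impulses, so conditional on $\cF_{n\delta}$ the distribution of $X_{(n+1)\delta^-}$ coincides with that of the uncontrolled process starting from $X_{n\delta}$ after time $\delta$.

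First I would derive the one-step recursion. The tower property $\mu^\gamma(Z) = \mu^\gamma(\mu^\gamma(Z \mid \cF_{n\delta}))$, which follows immediately from the tower for conditional expectation applied to $e^{\gamma\,\cdot}$, combined with (b) and assumption \eqref{A.3}, gives
\[
\mu^\gamma_{(x,V)}(\omega(X_{(n+1)\delta^-}) \mid \cF_{n\delta}) = \mu^\gamma_{X_{n\delta}}(\omega(X_\delta)) \leq b_1\omega(X_{n\delta}) + M_2(\gamma).
\]
Combining (a), monotonicity and translation invariance of $\mu^\gamma$, and the scaling identity $\mu^\gamma(cZ) = c\,\mu^{c\gamma}(Z)$ for $c>0$ (immediate from the definition of the entropic utility), I obtain
\[
u_{n+1}^\gamma \leq b_1\, u_n^{b_1\gamma} + M_2(\gamma) + \omega_U.
\]

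To close the recursion I would use the monotonicity of $\mu^\alpha$ in $\alpha$. For $\gamma \geq 0$ we have $b_1\gamma \leq \gamma$, so $u_n^{b_1\gamma}\leq u_n^\gamma$, yielding $u_{n+1}^\gamma \leq b_1 u_n^\gamma + M_2(\gamma) + \omega_U$, which iterates to
\[
u_n^\gamma \leq b_1^n\omega(x) + \frac{M_2(\gamma) + \omega_U}{1 - b_1},
\]
a bound uniform in $n$ and $V$. For $\gamma<0$, monotonicity gives $u_n^\gamma \leq u_n^0 = \bE_{(x,V)}[\omega(X_{n\delta})]$, so one applies the same argument at $\gamma=0$ (where the scaling reduces to $\bE$-linearity) and obtains the analogous bound with $M_2(0)$ in place of $M_2(\gamma)$. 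Taking the supremum over $t = n\delta \in \bT_\delta$ and $V \in \bV_\delta$ then yields \eqref{eq:w.omega}.

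The main obstacle is that the scaling identity $\mu^\gamma(cZ)=c\mu^{c\gamma}(Z)$ introduces a new risk-aversion parameter $b_1\gamma$ into the right-hand side, so that a naive iteration would force one to control $M_2$ along the full sequence $\{b_1^k\gamma\}_{k\geq 0}$ converging to $0$. The monotonicity of $\mu^\alpha$ in $\alpha$, together with the case split at $\gamma=0$, is precisely what collapses this shifted parameter onto a single value and allows the geometric drift factor $b_1$ to survive each iteration, giving a bound depending only on $\omega(x)$, $\omega_U$, $b_1$, and $M_2(\max(\gamma,0))$.
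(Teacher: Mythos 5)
Your argument is correct, and it reaches the bound by a genuinely different (and somewhat leaner) route than the paper. The paper first establishes a \emph{pathwise} telescoping estimate: it introduces the positive-part noise function $b_2(z,y)=[\omega(z+y)-b_1\omega(z)]_{+}$ of \eqref{eq:b2}, proves separately that $\tilde M_2(|\gamma|)=\sup_{x\in E}\mu_x^{|\gamma|}(b_2(x,X_\delta-x))<\infty$ as in \eqref{eq:tildeM1}, bounds $\omega(X_{n\delta})$ by $\omega(x)+a+\sum_{i=0}^{n-1}b_1^iB_i^n$ along the whole trajectory, and only then evaluates a single entropic utility of that sum, peeling off the noise terms one at a time via the tower property and the bound $\mu_x^{\gamma b_1^i}\le\mu_x^{|\gamma|}$. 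You instead derive a scalar one-step recursion $u_{n+1}^\gamma\le b_1u_n^{b_1\gamma}+M_2(\gamma)+\omega_U$ directly from \eqref{A.3}, the tower property, cash-additivity and the scaling identity $\mu^\gamma(cZ)=c\,\mu^{c\gamma}(Z)$, and then close it with the same monotonicity-in-$\gamma$ device (for $\gamma>0$ via $u_n^{b_1\gamma}\le u_n^\gamma$; for $\gamma\le 0$ by dominating with the $\gamma=0$ case). What your route buys is that it never needs the auxiliary function $b_2$ or the finiteness argument for $\tilde M_2(|\gamma|)$ --- the constant $M_2(\gamma)$ from \eqref{A.3} is used as is --- and the geometric iteration happens at the level of the deterministic sequence $(u_n^\gamma)$ rather than inside one large entropic evaluation. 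What the paper's pathwise decomposition buys in exchange is reusability: the same trajectory estimate is recycled almost verbatim in Remark~\ref{rem:full.time} to pass from the dyadic grid to continuous time, where your purely recursive formulation would have to be re-derived for the off-grid increment. Both proofs rest on the same three pillars (geometric drift, entropic time-consistency plus the Markov property between impulses, and monotonicity of $\alpha\mapsto\mu^\alpha$), so the difference is one of bookkeeping rather than of substance; your version is a valid and arguably cleaner substitute for the dyadic statement as given.
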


\begin{proof}
 Let us fix $\gamma\in\bR$. Let $b_2\colon E\times E\to \bR_{+}$ be given by
 \begin{equation}\label{eq:b2}
 b_2(z,y):=[\omega(z+y)-b_1\omega(z)]_{+}.
 \end{equation}
In particular, note that for any $x\in E$ we get $\omega(X_{\delta})\leq b_1\omega(x)+b_2(x,X_\delta-x)$ and
\begin{equation}\label{eq:tildeM1}
\tilde M_2(|\gamma|):=\sup_{x\in E}\mu_x^{|\gamma|}\left(b_2(x,X_{\delta}-x)\right) <\infty.
\end{equation}
For completeness, let us outline the proof of \eqref{eq:tildeM1}. On the first hand, note that for any sequence
$(x_n)_{n\in\bN}$, where $x_n\in E$, taking the limit $n\to\infty$, we get
$\mu_{x_n}^{|\gamma|}\left(\omega(X_{\delta})-b_1\omega(x_n)\right) \to\infty$ if and only if
$\bE_{x_n}\left[e^{|\gamma|(\omega(X_{\delta})-b_1\omega(x_n))}\right]\to\infty$. Consequently, since function $z\mapsto e^z$
is bounded from below, we get $\mu_{x_n}^{|\gamma|}\left(\omega(X_{\delta})-b_1\omega(x_n)\right) \to\infty$ if and
only if $\mu_{x_n}^{|\gamma|}\left(b_2(x_n,X_{\delta}-x_n)\right) \to\infty$. On the other hand, using \eqref{A.3}, we
get $\mu_{x_n}^{|\gamma|}\left(\omega(X_{\delta})-b_1\omega(x_n)\right) \leq M_{2}(|\gamma|)$. These two facts imply
\eqref{eq:tildeM1}.

 Now, we fix $x\in E$ and introduce some additional auxiliary notation. Let $a:=\sup_{x\in U}\omega(x)$ and for any
 $n\in\bN$ let
 \begin{align*}
 A_n &:=\{X_{n\delta}\not\in U\},\\
  \cF_{n} & :=\sigma(X_s, s\in [0,i\delta]),\\
 B^n_i &:=b_2(X_{(n-i-1)\delta},X^{-}_{(n-i)\delta}-X_{(n-i-1)\delta}),\quad i=0,1,\ldots,n-1,
 \end{align*}
where $X_{t}^-$ is the state of $(X_t)$ before the (optional) shift; note that
$\1_{A_n}X_{n\delta}=\1_{A_n}X^{-}_{n\delta}$ for $n\in \bN$. For brevity, we also use $\mu^{\gamma}_{(x,V)}(\cdot \mid
\cF_i)$ to denote the $\cF_i$-conditional equivalent of $\mu^{\gamma}_{(x,V)}$.

Let us fix $V\in\bV_{\delta}$ and $t\in\bT_{\delta}$. Noting that $t=n\delta$ for some $n\in\bN$, using monotonicity of
$\mu^{\gamma}_{(x,v)}$, and \eqref{A.3}, we get
\begin{align}
 \mu^\gamma_{(x,V)}(\omega(X_{n\delta})) &\leq
 \mu^\gamma_{(x,V)}(\1_{A^{'}_n}a+\1_{A_n}\omega(X_{n\delta}))\nonumber\\
& \leq
\mu^\gamma_{(x,V)}\left(\1_{A^{'}_n}a+\1_{A_n}\left[b_1\omega(X_{(n-1)\delta})+b_2(X_{(n-1)\delta},X^{-}_{n\delta}-X_{(n-1)\delta})\right]\right)\nonumber\\
&\leq \ldots\nonumber\\
& \leq \mu^\gamma_{(x,V)}\left(\omega(x)+a+\sum_{i=0}^{n-1}\1_{\bigcap_{j=0}^{i}
A_{n-j}}b^{i}_1 B^n_i\right)\nonumber\\
& \leq \omega(x)+a+\mu^\gamma_{(x,V)}\left(\sum_{i=0}^{n-1}b^{i}_1B^n_i\right).\label{eq:pr1}
\end{align}
Using strong time-consistency and additivity of entropic utility, we have
\begin{equation}\label{eq:pr2}
\mu^\gamma_{(x,V)}\left(\sum_{i=0}^{n-1}b^{i}_1B^n_i\right)
\leq\mu^\gamma_{(x,V)}\left(\mu^\gamma_{(x,V)}\left(\sum_{i=0}^{n-1}b^{i}_1B^n_i\, \middle|\, \cF_{n-1}\right)\right)
\leq \mu^\gamma_{(x,V)}\left(\sum_{i=1}^{n-1}b^{i}_1B^n_i+\mu^\gamma_{(x,V)}\left(B^n_0 \, \middle|\,
\cF_{n-1}\right)\right),
\end{equation}
while from strong Markov property and \eqref{eq:tildeM1} we get
\begin{align}
\mu^\gamma_{(x,V)}\left(B^n_0\, \middle|\, \cF_{n-1}\right) & = \mu_{(x,V)}^{\gamma
}\left(b_2(X_{(n-1)\delta},X^{-}_{n\delta}-X_{(n-1)\delta})\, \middle|\, \cF_{n-1}\right)\nonumber\\
& =\mu_{X_{(n-1)\delta}}^{\gamma}\left(b_2(X_0,X_{\delta}-X_0)\right)\nonumber\\
& \leq \sup_{x\in E}\mu_x^{\gamma}\left(b_2(x,X_{\delta}-x)\right)\nonumber\\
& \leq \sup_{x\in E}\mu_x^{|\gamma|}\left(b_2(x,X_{\delta}-x)\right)=\tilde M_2(|\gamma|).\label{eq:pr3}
\end{align}
Consequently, combining \eqref{eq:pr3}, \eqref{eq:pr2}, and \eqref{eq:pr1} we get
\begin{align*}
\mu^\gamma_{(x,V)}(\omega(X_{n\delta})) & \leq \omega(x)+a+\tilde
M_2(|\gamma|)+\mu^\gamma_{(x,V)}\left(\sum_{i=1}^{n-1}\1_{A_{n-i}}b^{i}_1 B^n_i\right).
\end{align*}
Using similar reasoning recursively and noting that for $i=1,\ldots,n-1$ we have
\begin{align}
\mu^{\gamma}_{x,V}\left(b^{i}_1(\delta)B^n_i\, \middle|\, \cF_{n-i-1}\right) & \leq \sup_{x\in
E}\mu_x^{\gamma}\left(b^{i}_1(\delta)b_2(x,X_{\delta}-x)\right)\nonumber\\
& =  b^{i}_1(\delta)\sup_{x\in E}\mu_x^{\gamma b^{i}_1(\delta)}\left(b_2(x,X_{\delta}-x)\right)\nonumber\\
& \leq  b^{i}_1(\delta)\sup_{x\in E}\mu_x^{|\gamma|}\left(b_2(x,X_{\delta}-x)\right)\nonumber\\
& =  b^{i}_1(\delta)\tilde M_2(|\gamma|),\label{eq:omega.n4}
\end{align}
we finally get
\begin{equation}\label{eq:omega.n2}
\mu^\gamma_{(x,V)}(\omega(X_{n\delta}))\leq \omega(x)+a+\tilde M_2(|\gamma|)\sum_{i=0}^{n-1}b_1^{i} \leq
\omega(x)+a+\tfrac{1}{1-b_1}\tilde M_2(|\gamma|).
\end{equation}
As the choice of $V\in\bV_{\delta}$ and $t\in\bT_{\delta}$ was arbitrary, and the upper bound in \eqref{eq:omega.n2} is
independent of both, we know that \eqref{eq:w.omega} is satisfied on $\bT_\delta$ which concludes the proof.
\end{proof}

Finally, we are ready to link Bellman's equation to the corresponding dyadic optimal control problem~\eqref{eq:main1}.

\begin{proposition}\label{pr:RSC.Bellman.solution2}
Under assumptions \eqref{A.1}--\eqref{A.4} we get
\[
\lambda^{\gamma}_{\delta}/\delta=\sup_{V\in\bV_{\delta}}J_\gamma(x,V),
\]
i.e. the optimal value in problem \eqref{eq:main1} corresponds to the solution of Bellman equation
\eqref{eq:rsc:bellmaneq}.
\end{proposition}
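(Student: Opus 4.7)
The plan is to iterate the Bellman equation over $n$ dyadic steps (with equality along the Bellman-greedy strategy $V^*$), apply the entropic H\"older inequality (Lemma~\ref{lm:holder}) to decouple the accumulated cost from the terminal value-function term, and then exploit the uniform $\omega$-moment bound of Proposition~\ref{pr:w.omega} together with the continuity result of Proposition~\ref{pr:lambda.continuity} to pass to the $n\to\infty$ limit.

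First, starting from the one-step Bellman inequality $w^\gamma_\delta(y)+\lambda^\gamma_\delta \geq \mu^{\gamma,a}_y(\int_0^{\delta}f(X_s)\,ds+w^\gamma_\delta(X_\delta))+\bar c(y,a)$ inherent in~\eqref{eq:bellmaneq2} (with equality at the maximizer $a_{(y,w^\gamma_\delta)}$), I would use the strong Markov property and the tower/time-consistency property of the conditional entropic utility---namely $\mu^\gamma(X+Y)=\mu^\gamma(X+\mu^\gamma(Y\mid\cF))$ for $\cF$-measurable $X$---to iterate over $n$ grid times and obtain
\[
w^\gamma_\delta(x)+n\lambda^\gamma_\delta \;\geq\; \mu^\gamma_{(x,V)}\bigl(G_n+w^\gamma_\delta(X_{n\delta})\bigr),\quad G_n:=\int_0^{n\delta}f(X_s)\,ds+\sum_{\tau_i\leq n\delta}c(X_{\tau_i^-},\xi_i),
\]
with equality whenever $V=V^*$ is the Markov strategy selecting the Bellman maximizer at every grid point.

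Next, to compare $\mu^\gamma_{(x,V)}(G_n+W_n)$ (where $W_n:=w^\gamma_\delta(X_{n\delta})$) to $\mu^\gamma_{(x,V)}(G_n)=J_{T_n}(x,V)$, I would invoke both sides of Lemma~\ref{lm:holder}: the ``superadditive'' bound $\mu^\gamma(A+B)\geq \mu^{p\gamma}(A)+\mu^{q\gamma}(B)$ for $\gamma<0$ and conjugate exponents $p,q>1$ to handle the upper-value side, and the ``mixed-sign'' reverse-H\"older bound $\mu^\gamma(A+B)\leq \mu^{p'\gamma}(A)+\mu^{q'\gamma}(B)$ for $p'\in(0,1)$, $q'<0$ to handle the lower-value side along $V^*$. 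Since $|W_n|\leq \|w^\gamma_\delta\|_\omega(1+\omega(X_{n\delta}))$ and Proposition~\ref{pr:w.omega} bounds $\mu^{\tilde\gamma}_{(x,V)}(\omega(X_{n\delta}))$ uniformly in $n$ and $V$ for every fixed $\tilde\gamma$, the noise contributions $\mu^{q\gamma}_{(x,V)}(W_n)$ and $\mu^{q'\gamma}_{(x,V)}(W_n)$ are $O(1)$ uniformly in $n$ and $V$. After dividing by $n\delta$ and passing to $\liminf_n$ these noise terms disappear, yielding $\lambda^\gamma_\delta/\delta\geq \liminf_n \mu^{p\gamma}_{(x,V)}(G_n)/(n\delta)$ for every $V$ and $p>1$, and the reverse inequality $\lambda^\gamma_\delta/\delta\leq \liminf_n \mu^{p'\gamma}_{(x,V^*)}(G_n)/(n\delta)$ for every $p'\in(0,1)$.

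Finally, letting $p,p'\to 1$ and invoking the continuity of $\gamma\mapsto \lambda^\gamma_\delta$ from Proposition~\ref{pr:lambda.continuity} (together with an analogous continuity of $\gamma\mapsto J^\gamma(x,V)$, obtainable by the same noise-stripping scheme applied to the objective), the shifted-$\gamma$ quantities converge to $J(x,V)$ and $J(x,V^*)$ respectively, so $\sup_V J(x,V)\leq \lambda^\gamma_\delta/\delta\leq J(x,V^*)$, and the trivial $J(x,V^*)\leq \sup_V J(x,V)$ collapses the chain to equality. The essential obstacle is exactly this H\"older-induced shift of the risk-aversion parameter: Lemma~\ref{lm:holder} cannot separate an additive pair inside $\mu^\gamma$ without perturbing $\gamma$ to some $p\gamma\neq\gamma$, and closing the gap requires the coupled use of Proposition~\ref{pr:w.omega} (which keeps the noise uniformly bounded in $n$ and $V$ for any fixed H\"older split) with Proposition~\ref{pr:lambda.continuity} (which lets the shift parameter be returned to $1$). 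This combined control is the ``long-run noise control'' mechanism advertised in the introduction, without which the unbounded-$\omega$ setting would be intractable.
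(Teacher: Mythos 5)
Your proposal assembles the right ingredients (iterating the Bellman equation, the entropic H\"older split, the uniform bound of Proposition~\ref{pr:w.omega}, and a continuity-in-$\gamma$ argument), but it applies the H\"older perturbation to the wrong object, and this creates a genuine gap. You start from the Bellman equation at the \emph{target} parameter $\gamma$ and let H\"older shift the risk parameter on the \emph{accumulated-reward} term, ending up with $\lambda^{\gamma}_{\delta}/\delta \geq \liminf_n \mu^{p\gamma}_{(x,V)}(G_n)/T_n$ and $\lambda^{\gamma}_{\delta}/\delta \leq \liminf_n \mu^{\gamma/p}_{(x,V^*)}(G_n)/T_n$. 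By monotonicity of $\gamma\mapsto\mu^\gamma$ these perturbations go the \emph{unhelpful} way in both directions: $\mu^{p\gamma}\leq\mu^{\gamma}$ gives a lower bound on $\lambda$ by something \emph{smaller} than $J_\gamma(x,V)$, and $\mu^{\gamma/p}\geq\mu^{\gamma}$ gives an upper bound by something \emph{larger} than $J_\gamma(x,V^*)$, so neither inequality can be closed by monotonicity. You are therefore forced to prove $\lim_{p\to 1}\liminf_n \mu^{p\gamma}_{(x,V)}(G_n)/T_n = J_\gamma(x,V)$, i.e.\ continuity of the long-run objective in the risk parameter. This is not "obtainable by the same noise-stripping scheme": it requires interchanging $\lim_{p\to 1}$ with $\liminf_n$, for which no uniform-in-$n$ control is available (the cost $G_n$ grows linearly in $n$, so the two H\"older bounds on $\mu^{p\gamma}(G_n)-\mu^{\gamma}(G_n)$ differ by $O(n)$), and the paper establishes no such result.

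The paper's proof avoids this entirely by perturbing the Bellman parameter rather than the objective: in the direction $\lambda^{\gamma}_{\delta}/\delta\leq\sup_V J_\gamma$ it works with the Bellman solution $(w^{\bar\gamma}_\delta,\lambda^{\bar\gamma}_\delta)$ for $\bar\gamma:=p\gamma$ and applies~\eqref{eq:holder1b}, so that the reward term comes out at exactly $\bar\gamma/p=\gamma$; in the reverse direction it takes $\bar\gamma:=\gamma/p$ and applies~\eqref{eq:holder1a}, so the reward term comes out at $p\bar\gamma=\gamma$. In both cases the only perturbed quantity is $\lambda^{\bar\gamma}_\delta$, and Proposition~\ref{pr:lambda.continuity} (continuity of $\gamma\mapsto\lambda^{\gamma}_{\delta}$, which \emph{is} proved, via the local contraction of Theorem~\ref{th:1}) lets $p\to 1$. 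To repair your argument you should restructure it this way; the noise-control step via Proposition~\ref{pr:w.omega}, which you describe correctly, then goes through unchanged.
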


\begin{proof}
Proposition~\ref{pr:w.omega}.
For brevity and with slight abuse of notation, for any $n\in\bN$ we set $T_n:=n\delta$ and
\[
Z_n:={\int_0^{T_n} f(X_s)\d s +\sum_{i=1}^{\infty} \1_{\{ \tau_i\leq {T_n}\}}c(X_{\tau^-_i},\xi_i)};
\]
note the exact dynamics of $Z_n$ is determined by an underlying strategy $V=\{(\tau_i,\xi_i)\}_{i=1}^{\infty}$.

First, let us show that
\begin{equation}\label{eq:ver1}
\lambda^{\gamma}_{\delta}/\delta\leq \sup_{V\in\bV_{\delta}}J_\gamma(x,V).
\end{equation}
Fix $n\in\bN$, $p>1$, and set $\bar \gamma :=p\gamma$. Let $q$ be the conjugate index for $p$ and let
$\phi:=-\|w^{\bar\gamma}_{\delta}\|_{\omega}q\gamma$. For the strategy $\hat V=\{(\hat \tau_i,\hat
\xi_i)\}_{i=1}^{\infty}\in \bV_{\delta}$ determined by the Bellman equation~\eqref{eq:rsc:bellmaneq} for $\bar\gamma$,
using reverse H{\"o}lder's inequality for $p$ and $q$ (see Lemma~\ref{lm:holder}), we get
\begin{align}
\lambda^{\bar\gamma}_{\delta}/\delta & =\frac{1}{T_n}\mu^{\bar\gamma}_{(x, \hat
V)}\left(Z_n+w^{\bar\gamma}_{\delta}(X_{T_n})-w^{\bar\gamma}_{\delta}(x)\right)\nonumber\\
 & \leq \frac{1}{T_n}\left[\mu^{\bar\gamma}_{(x, \hat
 V)}\left(Z_n+\|w^{\bar\gamma}_{\delta}\|_{\omega}\omega(X_{T_n})\right)+\|w^{\bar\gamma}_{\delta}\|_{\omega}
 -w^{\bar\gamma}_{\delta}(x)\right]\nonumber\\
 & \leq \frac{1}{T_n}\left[\mu_{(x, \hat V)}^{\bar\gamma/p}\left(Z_n\right) +\mu_{(x, \hat
 V)}^{-q\bar\gamma/p}\left(\|w^{\bar\gamma}_{\delta}\|_{\omega}\omega(X_{T_n})\right)+\|w^{\bar\gamma}_{\delta}\|_{\omega}
 -w^{\bar\gamma}_{\delta}(x)\right]\nonumber\\
  & \leq \frac{1}{T_n}\left[\mu_{(x, \hat V)}^{\gamma}\left(Z_n\right)
  +\|w^{\bar\gamma}_{\delta}\|_{\omega}\mu_{(x,\hat
  V)}^{\phi}\left(\omega(X_{T_n})\right)+\|w^{\bar\gamma}_{\delta}\|_{\omega}
  -w^{\bar\gamma}_{\delta}(x_0)\right].\label{eq:lambda2}
\end{align}
Using Proposition~\ref{pr:w.omega} we know that $\sup_{n\in\bN}\mu_{(x,V)}^{\phi}\left(\omega(X_{T_n})\right)<\infty$.
Consequently, letting $n\to \infty$ we obtain
\begin{align}
\lambda_{\delta}^{\bar\gamma}/\delta & \leq \liminf_{n\to\infty}\frac{1}{T_n}\left[\mu_{(x,\hat
V)}^{\gamma}\left(Z_n\right)
+\|w^{\bar\gamma}_{\delta}\|_{\omega}\sup_{n\in\bN}\mu_{(x,V)}^{\phi}\left(\omega(X_{T_n})\right)+\|w^{\bar\gamma}_{\delta}\|_{\omega}
-w^{\bar\gamma}_{\delta}(x_0)\right]\nonumber\\
&=   \liminf_{n\to\infty}\frac{1}{T_n}\mu_{(x,\hat V)}^{\gamma}\left(Z_n\right)\leq
\sup_{V\in\bV_{\delta}}J_\gamma(x,V).\label{eq:ver4}
\end{align}
Now, recall that $\bar\gamma=p\gamma$ and note that \eqref{eq:ver4} holds for any choice of $p>1$. Thus, using
Proposition~\ref{pr:lambda.continuity} and letting $p\to 1$, we get that  $\lambda_{\delta}^{p\gamma} \to
\lambda_{\delta}^{\gamma}$. This concludes the proof of \eqref{eq:ver1}.

Second, we prove inequality
\begin{equation}\label{eq:ver2}
\lambda^{\gamma}_{\delta}/\delta\geq \sup_{V\in\bV_{\delta}}J^{\delta}_\gamma(x,V).
\end{equation}
Again, we fix $n\in\bN$ and $p>1$. Let $\bar \gamma :=\gamma/p$ and
$\phi:=-\|w^{\bar\gamma}_{\delta}\|_{\omega}q\bar\gamma$, where $q$ is the conjugate index for $p$. For any strategy
$V\in\bV_{\delta}$, using H{\"o}lder's inequality for $p$ and $q$ (see Lemma~\ref{lm:holder}), we get
\begin{align}
\lambda^{\bar\gamma}_{\delta}/\delta & \geq\frac{1}{T_n}\mu^{\bar\gamma}_{(x,
V)}\left(Z_n+w^{\bar\gamma}_{\delta}(X_{T_n})-w^{\bar\gamma}_{\delta}(x)\right)\nonumber\\
 & \geq \frac{1}{T_n}\left[\mu^{\bar\gamma}_{(x,
 V)}\left(Z_n-\|w^{\bar\gamma}_{\delta}\|_{\omega}\omega(X_{T_n})\right)-\|w^{\bar\gamma}_{\delta}\|_{\omega}
 -w^{\bar\gamma}_{\delta}(x)\right]\nonumber\\
 & \geq \frac{1}{T_n}\left[\mu_{(x, V)}^{p\bar\gamma}\left(Z_n\right) +\mu_{(x,
 V)}^{q\bar\gamma}\left(-\|w^{\bar\gamma}_{\delta}\|_{\omega}\omega(X_{T_n})\right)-\|w^{\bar\gamma}_{\delta}\|_{\omega}
 -w^{\bar\gamma}_{\delta}(x)\right]\nonumber\\
  & \geq \frac{1}{T_n}\left[\mu_{(x,V)}^{\gamma}\left(Z_n\right) -\|w^{\bar\gamma}_{\delta}\|_{\omega}\mu_{(x,
  V)}^{\phi}\left(\omega(X_{T_n})\right)-\|w^{\bar\gamma}_{\delta}\|_{\omega}
  -w^{\bar\gamma}_{\delta}(x_0)\right].\label{eq:lambda2}
\end{align}
As before, using Proposition~\ref{pr:w.omega} and letting $n\to \infty$, for any $V\in\bV_{\delta}$ we obtain
\[
\lambda_{\delta}^{\bar\gamma}/\delta\geq  \liminf_{n\to\infty}\frac{1}{T_n}\mu_{(x,V)}^{\gamma}\left(Z_n\right).
\]
As the choice of $V\in\bV$ is arbitrary we get
\[
\lambda_{\delta}^{\bar\gamma}/\delta \geq \sup_{V\in\bV_{\delta}}J^{\delta}_\gamma(x,V).
\]
Finally, as in the proof of \eqref{eq:ver1}, using Proposition~\ref{pr:lambda.continuity} and letting $p\to 1$, we get
$\lambda_{\delta}^{\gamma/p} \to \lambda_{\delta}^{\gamma}$, which concludes the proof of \eqref{eq:ver2}, and
Proposition~\ref{pr:RSC.Bellman.solution2}.

\end{proof}

\begin{remark}[Application of entropic H{\"o}lder's inequalities]
The key step in the proof of Proposition \ref{pr:RSC.Bellman.solution2} is the application on the Holder's inequality
and reverse Holder's inequality for the entropic risk; see Lemma~\ref{lm:holder}. Using the induced superadditivity and
subadditivity property (for different risk averse parameters), one can split the main dynamics from
$w^{\gamma}_{\delta}(\cdot)$. It is interesting to note that the same approach could be applied in \cite[Proposition
5]{PitSte2016}, i.e. using our framework it is easy to show that the solution to the Bellman's equation is the optimal
solution, without imposing any additional constraints as in \cite[Proposition 5]{PitSte2016}.
\end{remark}

\begin{remark}[Full time-grid]\label{rem:full.time}
While in Proposition~\ref{pr:w.omega} and Proposition~\ref{pr:RSC.Bellman.solution2} we restricted ourselves to the
dyadic time-grid, the results holds (under additional mild assumptions) on the full-time grid, i.e. with objective
function \eqref{eq:objective.function} replaced by
\[
\tilde J(x,V):=\liminf_{T\to\infty}\frac{J_{T}(x,V)}{T}.
\]
Following comments from
Remark~\ref{rem:time.grid1} and treating $b_1$ and $M_i$ in \eqref{A.3} as functions of $\delta$, let us assume that
$M_i(\gamma,\delta)\to 0$ as $\delta\to 0$, for any $\gamma\in\bR$. For brevity, let us only outline how to extend the
proof of Proposition~\ref{pr:w.omega}. Let $t>0$ be such that $t\not\in \bT_{\delta}$ and let
$V\in\bV_{\delta}$. We know that there exists $\delta_0<\delta$ such that $M:=\sup_{\delta\in
(0,\delta_0]}M_i(|\gamma|,\delta)<\infty$. Also, we know that there exist $n\in\bN$ and $m\in\bN$ such that $t=n\delta
+m\delta_0+\epsilon$, where $m\delta_0<\delta$ and $\epsilon\in [0,\delta_0)$. For brevity we set $t_0
:=n\delta+\epsilon$. Using \eqref{A.3} $m$-times for time step $\delta_0$ and once for time step $\epsilon$ (if
required), and using notation introduced in \eqref{eq:b2}, we get
\[
\omega(X_t)\leq \omega(X_{n\delta})+b_2(X_{n\delta},X^{-}_{t_0}-X_{n\delta})+ \sum_{i=0}^{m-1}b_1^i(\delta_0)b_2(X_{t_0
+(m-i-1)\delta_0},X_{t_0 +(m-i)\delta_0}-X_{t_0 +(m-i-1)\delta_0}).
\]
Now, using similar arguments as in the proof of \eqref{eq:omega.n2}, we get
\begin{align}
\mu^\gamma_{(x,V)}(\omega(X_{t})) & \leq \mu^\gamma_{(x,V)}(\omega(X_{n\delta})) +\tilde
M+\tfrac{1}{1-b_1(\delta_0)}\tilde M\nonumber\\
&\leq \omega(x)+a+ \tfrac{1}{1-b_1(\delta)}\tilde M_2(|\gamma|,\delta)+\tilde M+\tfrac{1}{1-b_1(\delta_0)}\tilde
M,\label{eq:upper.1.no}
\end{align}
where $\tilde M$ and $\tilde M_2(|\gamma|,\delta)$ is constructed as in \eqref{eq:tildeM1}.
As the choice of $\delta_0$ was independent of the choice of $t$ and $V$, so is the upper bound in
\eqref{eq:upper.1.no}. This concludes the proof of \eqref{eq:w.omega} for $t\in\bT$.

\end{remark}

\section{Reference examples}\label{S:examples}
In this section we want to show examples of processes satisfying assumptions \eqref{A.1}--\eqref{A.4}. For brevity, as assumptions \eqref{A.1}, \eqref{A.2}, and \eqref{A.4} are rather  standard, we decided to focus on assumption \eqref{A.3} and describe only the dynamic of the uncontrolled process; one could easily enhance this process to get a proper example satisfying \eqref{A.1}--\eqref{A.4}. Example~\ref{ex:1} focus on Ito-like diffusion process, Example~\ref{ex:2} is linked to regular step processes studied in \cite{BluGet2007}, and Example~\ref{ex:3} considers a piecewise deterministic process introduced in \cite{Dav1984} and studied later in the context of control theory in \cite{BauRie2011}. For simplicity, in the first two examples we assume that $E=\bR^d$ and $\delta<1$, and in the third we set $E=\bR$.

\begin{example}[Ito-like diffusion]\label{ex:1}
Let $(X_t)$ be a solution to equation
\begin{equation}\label{eqq}
 dX_t=(AX_t+g(X_t))dt + \sigma(X_t)dW_t,
\end{equation}
where matrix $A\in \bR^{d\times d}$ is stable (real parts of its eigenvalues are negative) and diagonalizable (its
geometric and algebraic multiplicities coincides), functions $g:\bR^d \to \bR^d$ and $\sigma: \bR^d \to \bR^{d\times d}$ are bounded, and $(W_t)$ is $\bR^d$-valued Brownian motion. Additionally, we assume
that $\sigma$ is Lipschitz continuous to guarantee strong solution of \eqref{eqq} with $g\equiv 0$. Then, there exists a
weak solution to \eqref{eqq} given by
\begin{equation}\label{eqqq}
X_t=e^{At}X_0 + \int_0^t e^{A(t-s)}g(X_s)ds + \int_0^t e^{A(t-s)}\sigma(X_s) dW_s.
\end{equation}
Let $\omega(x):=\max_{i\in \{1,\ldots,d\}} |x_i|$ for $x\in\bR^d$. Then, for any $t\leq 1$  and $\gamma\in \bR$ we get
\begin{equation}\label{eqqqq}
\mu_x^\gamma(\omega(X_t))\leq  e^{-\alpha t}\omega(x)+\|g\|_{\infty}+\mu_x^\gamma \left(\omega\left( \int_0^t e^{A(t-s)}\sigma(X_s) dW_s\right)\right),
\end{equation}
 where $\alpha\in \bR_{+}$ is a (negative of) maximal real  part of eigenvalues of $A$ and $\|\cdot\|_{\infty}$ denotes the supremum norm. We now show that the last term in \eqref{eqqqq} could be uniformly bounded for any $\gamma\in \bR$. For simplicity, and without loss of generality, we assume that $\gamma>0$; recall that entropic risk measure is monotone with respect to $\gamma$. Let
 \begin{equation}\label{eq:example.z}
 Z(t):=\int_0^t e^{A(t-s)}\sigma(X_s) dW_s,\quad\quad t\in \bR_{+},
\end{equation}
and let $Z_i(t)$ denote the i-th component of $Z(t)$, for $i=1,\ldots,d$. Notice that for any $\gamma> 0$ and $x\in E$ we have
 \begin{equation}\label{eq:example.z2}
 \bE_{x}\left[e^{\gamma \omega(Z(t))}\right]\leq \bE_{x}\left[e^{\gamma \sum_{i=1}^{d}|Z_i(t)|}\right] \leq \sum_{(s_1,\ldots,s_d)\in \left\{0,1\right\}^d} \bE_{x}\left[e^{\gamma \sum_{i=1}^{d} (-1)^{s_i} Z_i(t)}\right].
 \end{equation}
 By the local martingale property of $e^{\gamma \sum_{i=1}^{d} (-1)^{s_i}Z_i(t) - {1\over 2}\gamma^2 \langle\sum_{i=1}^{d}
 (-1)^{s_i}Z_i\rangle_t}$ (we refer to Problem 3.38 in \cite{KarShr1998b}), for any $x\in E$ we get
 \begin{equation}\label{eq5}
 \bE_{x}\left[e^{\gamma \omega(Z(t))}\right]\leq 2^d e^{{1\over 2}\gamma^2 d\|\sigma\|_{\infty}^2}.
\end{equation}
This completes the proof the second estimate in \eqref{A.3}. The first estimate in \eqref{A.3}, i.e. inequality $\mu_x^\gamma (\int_0^t \omega(X_s)ds)\leq \omega(x) + M_1(\gamma)$, can be obtained in a similar way by exploiting property \eqref{eqqq}.
\end{example}
\begin{example}[Regular step process]\label{ex:2}
Let $(X_t)$ be a regular step process that is constructed using the following logic: a particle is starting from point $X_0=z_0$ and remains in there for exponentially distributed time with parameter $r(z_0)$. Then, it jumps to another (randomly chosen) state $z_1$ and remains there for exponentially distributed time with parameter $r(z_1)$, and so on. The intensity function $r\colon \bR^{d} \to \bR_+$ is given by $r(\cdot):=\max \left\{ \|\cdot\|^{1+\epsilon},r_0 \right\}$, where $r_0>0$ and $\epsilon>0$ are fixed constants, and where $\|\cdot\|$ is the standard $\bR^d$-norm. The jump from $z_n\in \bR^d$ to $z_{n+1}\in \bR^d$ (for $n\in\bN$) is made according to the transition law $Q(z_n,\cdot)$ such that
\begin{equation}
z_{n+1}=A(z_n)+w_n,
\end{equation}
where  $(w_n)$  is an i.i.d. sequence of bounded $\bR^d$-valued random vectors, and the function $A\colon \bR^d\to \bR^d$ satisfy $\lim_{\|x\|\to \infty} \tfrac{1}{\|x\|}\|A(x)\|<1$. Then, there exists a constant $K\in \bR_{+}$ and $\beta \in (0,1)$ such that $\|A(x)\|+\|w_i\|\leq \beta \|x\| +K$ for $x\in E$ and $i\in\bN$. Consequently, for any $n\in\bN$, we get $\|z_{n+1}\|\leq \beta \|z_n\|+K$ and, by iteration,
\begin{equation}\label{eq:ex2.1}
\|z_{n+1}\|\leq \beta^{n+1}\|z_0\| + {K\over 1-\beta}.
\end{equation}
Let $\omega(\cdot):=\|\cdot\|$ and let $\tau(x)$ denote the time of the first process jump for any fixed starting point $x\in E$. Then, for any $x\in E$ and $\gamma>0$, using \eqref{eq:ex2.1}, we get
\begin{align}\label{eq:ex2.2}
\bE_x\left[e^{\gamma \omega(X_\delta)}\right] &\leq \bE_x\left[\1_{\{\delta<\tau(x)\}}e^{\gamma \omega(x)}\right]+\bE_x\left[\1_{\{\delta\geq \tau(x)\}} \sup_{n\in\bN}e^{\gamma\omega(z_n)}\right] \nonumber \\
&\leq \bP_{x}[\delta<\tau(x)]\cdot e^{\gamma \omega(x)}+ \bE_x\left[\1_{\{\delta\geq \tau(x)\}}\right]e^{\gamma\beta\omega(x)+\gamma\frac{K}{1-\beta}}\nonumber \\
&\leq e^{-\delta r(x)+\gamma \omega(x)}+ e^{\gamma \beta \omega(x)+\gamma{K\over 1-\beta}}.
\end{align}
Now, set $R(\gamma):=\max\{\sqrt[\epsilon]{\gamma/\delta},r_0\}$ and recall that $C_{R(\gamma)}=\{x\in \bR^d: \omega(x)<R(\gamma)\}$. For any $x\in C_{R(\gamma)}$, we get $e^{-\delta r(x)+\gamma \omega(x)}\leq e^{\gamma R(\gamma)}$, while for $x\not\in C_R$ we have
\begin{equation}
e^{-\delta r(x)+\gamma \omega(x)}\leq e^{-\delta\omega^{1+\epsilon}(x)}e^{\gamma \omega(x)}= e^{-\omega(x)(\delta R^\epsilon(\gamma)-\gamma)}\leq 1.
\end{equation}
Consequently, from \eqref{eq:ex2.2}, we get
\begin{equation}\label{eq:ex2.333}
\bE_x\left[e^{\gamma \omega(X_\delta)}\right]\leq \max\{e^{\gamma R(\gamma)}, 1\}+e^{\gamma \beta \omega(x)+\gamma{K\over 1-\beta}}\leq e^{\gamma \beta \omega(x) + \gamma \tilde K(\gamma)},
\end{equation}
where $\tilde K(\gamma)$ is some fixed constant independent of $x\in E$. This completes the proof of the second inequality in \eqref{A.3}, as \eqref{eq:ex2.333} could be rewritten as $\mu^{\gamma}_{x}(\omega(X_\delta)) \leq \beta \omega(x) + \tilde K(\gamma)$, for $x\in E$.
The proof of the first inequality in \eqref{A.3} follow directly from \eqref{eq:ex2.1}.

\end{example}

\begin{example}[Piecewise deterministic process]\label{ex:3} 
Assume that $(X_t)$ is a piecewise deterministic process. The deterministic part is a solution to a stable differential equation 
\begin{equation}\label{eqq1}
dX_t=F(X_t)dt,
\end{equation}
with initial state $X_0=x$. The process follows this dynamics till (random) jump moment, and then is subject to immediate shift after which its evolution follows the same deterministic logic till next jump occurs, and so on. We assume that the sequence of jumps, say $(\tau_n)$, is such that $(\tau_{n+1}-\tau_n)$ is i.i.d. and exponentially distributed with fixed intensity $r>0$. The shifts are made according to transition measure such that
\[
X_{\tau_n}=A(X_{\tau_n^-})+w_n,
\]
where $w_n$ is a sequence of i.i.d. standard normal random variables and  function $A\colon \bR\to \bR$ satisfy $|A(x)|\leq |x|+K$, for  $K>0$. Assuming suitable regularity of $F$, for any $t<\tau_1$ and initial state $x$, we get $X_t=\phi(x,t)$, where $\phi$ is a continuous function. Moreover, we assume that $\phi$ is such that for any $x\in E$ we get $|\phi(x,t)|\leq e^{-\alpha t}|x|+M$, where $\alpha, M>0$ are some predefined constants that are independent of $x$. Then, we get
\begin{equation}
\1_{\{\tau_1>\delta\}}|X_\delta| \leq \1_{\{\tau_1>\delta\}}\left(e^{-\alpha \delta}|x|+M\right),
\end{equation}
and, for any $n\in\bN$, by induction, 
\[
\1_{\{\tau_{n+1}>\delta\geq \tau_{n}\}}|X_\delta|\leq \1_{\{\tau_{n+1}>\delta\geq \tau_{n}\}}\left(e^{-\alpha\delta} |x|+ M+n(K+M) + \sum_{i=1}^{n} |w_i|\right).
\]
Consequently, for any $\gamma>0$, setting $\omega(\cdot):=\|\cdot\|$, $\beta:=e^{-\alpha\delta}$, $\tau_0:=0$, $w_0:=0$, and $D(\gamma):=\tfrac{1}{\gamma}\ln\bE[e^{\gamma |w_1|}]$, and noting that $(w_i)$ is independent of $(\tau_i)$, we get

\begin{align}
\bE_x\left[e^{\gamma \omega(X_\delta)}\right] &= \bE_x\left[\sum_{n=0}^{\infty} \1_{\{\tau_{n+1}>\delta\geq \tau_{n}\}}e^{\gamma \omega(X_\delta)}\right]\nonumber\\
&\leq e^{\gamma[\beta|x|+M]}\cdot\bE_x\left[\sum_{n=0}^{\infty} \1_{\{\tau_{n+1}>\delta\geq \tau_{n}\}}e^{\gamma \left[n(K+M)+\sum_{i=0}^{n}|w_i|\right]}\right]\nonumber\\
&\leq e^{\gamma[\beta|x|+M]}\cdot\sum_{n=0}^{\infty}\bE_x\left[\1_{\{\tau_{n+1}>\delta\geq \tau_{n}\}}\right]e^{n\gamma \left[K+M+D(\gamma)\right]}\nonumber \\
&\leq e^{\gamma[\beta|x|+M]}\cdot\sum_{n=0}^{\infty} \frac{(r\delta)^n\,e^{-r\delta}}{n!} \cdot e^{n\gamma \left[K+M+D(\gamma)\right]}.\label{eq:ex3.last}
\end{align}
Next, noting that 
\[
\sum_{n=0}^{\infty} \frac{(r\delta)^n\,e^{-r\delta}}{n!} \cdot e^{n\gamma \left[K+M+D(\gamma)\right]}<\infty,
\]
we can rewrite \eqref{eq:ex3.last} as
\[
\mu_{x}^{\gamma}(\omega(X_{\delta}))\leq \beta \omega(x)+\tilde D(\gamma),
\]
where $\tilde D(\gamma)$ is some constant that is independent of $x$; this concludes the proof of the left inequality in \eqref{A.3}. The second inequality in \eqref{A.3} follows in a similar manner.
\end{example}




\section{Appendix}\label{S:appendix}
For simplicity, in this section we assume that a probability space is fixed and for any $\gamma\in\bR\setminus\{0\}$ and $X\in L^0$ we set
\[
\mu^\gamma(X):=1/\gamma\, \ln \bE\left[\exp(\gamma X)\right].
\]
\begin{lemma}[H{\"o}lder's inequalities for entropic utility measure]\label{lm:holder}
Let $\gamma<0$ (resp. $\gamma>0$). Then, for any $p>1$ and the corresponding conjugate index $q$ we get
\begin{align}
\mu^{\gamma}(X+Y) &\geq \mu^{p\gamma}(X)+\mu^{q\gamma}(Y),\qquad \textrm{(resp. $\leq$)}\label{eq:holder1a}\\
\mu^{\gamma}(X+Y) & \leq \mu^{\gamma/p}(X)+\mu^{-q\gamma/p}(Y),\qquad \textrm{(resp. $\geq$)}\label{eq:holder1b}
\end{align}
where $X,Y\in L^0$.
\end{lemma}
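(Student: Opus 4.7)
My plan is to reduce both parts of the lemma to the classical H\"older inequality applied to positive random variables, and then propagate the resulting bound through the map $z\mapsto \tfrac{1}{\gamma}\ln z$, whose monotonicity depends on the sign of $\gamma$. Because all integrands below are strictly positive, no integrability hypotheses are needed beyond the usual $+\infty$ convention.

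For \eqref{eq:holder1a} I would factor $\bE[e^{\gamma(X+Y)}]=\bE[e^{\gamma X}\cdot e^{\gamma Y}]$ and apply H\"older with exponents $p,q$ to obtain
\[
\bE[e^{\gamma(X+Y)}]\leq \bigl(\bE[e^{p\gamma X}]\bigr)^{1/p}\bigl(\bE[e^{q\gamma Y}]\bigr)^{1/q}.
\]
Taking $\tfrac{1}{\gamma}\ln$ of both sides (which preserves the inequality when $\gamma>0$ and reverses it when $\gamma<0$) identifies the two right-hand-side terms as $\mu^{p\gamma}(X)$ and $\mu^{q\gamma}(Y)$, giving both stated variants in one stroke.

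For \eqref{eq:holder1b} the key trick is to insert the trivial factor $e^{(\gamma/p)Y}\cdot e^{-(\gamma/p)Y}\equiv 1$ into $\bE[e^{(\gamma/p)X}]$ and apply H\"older with the same pair $p,q$ to the factorisation $e^{(\gamma/p)(X+Y)}\cdot e^{-(\gamma/p)Y}$. This produces
\[
\bE[e^{(\gamma/p)X}]\leq \bigl(\bE[e^{\gamma(X+Y)}]\bigr)^{1/p}\bigl(\bE[e^{-(q\gamma/p)Y}]\bigr)^{1/q}.
\]
Taking logarithms, multiplying by $p/\gamma$, and using the elementary identity $\tfrac{p}{q\gamma}\ln\bE[e^{-(q\gamma/p)Y}]=-\mu^{-q\gamma/p}(Y)$, I would then rearrange to the claim; as before, the sign of $\gamma$ selects which of the two directions is obtained.

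The whole argument is essentially bookkeeping and I do not foresee a genuine obstacle. The only step requiring imagination is spotting the correct algebraic insertion of $e^{\pm(\gamma/p)Y}$ for \eqref{eq:holder1b}; the sign tracking in the final rescaling by $1/\gamma$ (respectively $p/\gamma$) is the other place where a minor slip could invert an inequality, so I would double-check that step carefully in both the $\gamma<0$ and $\gamma>0$ cases.
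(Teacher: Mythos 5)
Your proposal is correct and matches the paper's argument: part \eqref{eq:holder1a} is the same direct H\"older estimate on $e^{\gamma X}\cdot e^{\gamma Y}$ followed by $\tfrac{1}{\gamma}\ln$, and your ``insert $e^{\pm(\gamma/p)Y}$'' trick for \eqref{eq:holder1b} is exactly the paper's step of applying \eqref{eq:holder1a} at parameter $\gamma/p$ to the pair $(X+Y,\,-Y)$, just written out explicitly. No gaps.
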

\begin{proof}
We only show proof for $\gamma<0$ as the proof for $\gamma>0$ is analogous. Let us fix $p>1$. Using H{\"o}lder's
inequality applied to $e^{\gamma X}$ and $e^{\gamma Y}$ we get
\[
\bE\left[\exp(\gamma (X+Y))\right] \leq \bE[\exp(p\gamma X)]^{1/p}\bE[\exp(q\gamma Y)]^{1/q},
\]
Taking logarithm on both sides and multiplying by $1/\gamma<0$ we get
\[
\tfrac{1}{\gamma}\ln\bE\left[\exp(\gamma (X+Y))\right] \geq \tfrac{1}{p\gamma}\ln\bE[\exp(p\gamma
X)]+\tfrac{1}{q\gamma}\ln\bE[\exp(q\gamma Y)],
\]
which is equivalent to \eqref{eq:holder1a}. Next, applying \eqref{eq:holder1a} to $\tilde\gamma=\gamma/p$, $\tilde X:=
X+Y$, and $\tilde Y:=-Y$, we get
\[
\mu^{\gamma/p}(X) \geq \mu^{\gamma}(X+Y)+\mu^{q\gamma/p}(-Y)= \mu^{\gamma}(X+Y)-\mu^{-q\gamma/p}(Y),
\]
from which \eqref{eq:holder1b} follows.
\end{proof}

 {\small
 \bibliographystyle{agsm}

 }


 \end{document}